\newcommand{\cal}{\mathcal}
\renewcommand\>{\rangle}
\def\epsilon{\varepsilon}
\def\phi{\varphi}
\def\hat{\widehat}
\newcommand{\Out}{\mbox{Out}}
\newcommand{\Aut}{\mbox{Aut}}
\newcommand{\FN}{F_n}   
\newcommand{\Z}{\mathbb Z}
\def\strutdepth{\dp\strutbox}
\def \ss{\strut\vadjust{\kern-\strutdepth \sss}}
\def \sss{\vtop to \strutdepth{
\baselineskip\strutdepth\vss\llap{$\diamondsuit\;\;$}\null}}
\def\strutdepth{\dp\strutbox}
\def \sst{\strut\vadjust{\kern-\strutdepth \ssss}}
\def \ssss{\vtop to \strutdepth{
\baselineskip\strutdepth\vss\llap{$\spadesuit\;\;$}\null}}
\def\strutdepth{\dp\strutbox}
\def \ssh{\strut\vadjust{\kern-\strutdepth \sssh}}
\def \sssh{\vtop to \strutdepth{
\baselineskip\strutdepth\vss\llap{$\heartsuit\;\;$}\null}}
\def\qed{\hfill\rlap{$\sqcup$}$\sqcap$\par}
\def\bar{\overline}
\def\strutdepth{\dp\strutbox}
\def \ss{\strut\vadjust{\kern-\strutdepth \sss}}
\def \sss{\vtop to \strutdepth{
\baselineskip\strutdepth\vss\llap{$\diamondsuit\;\;$}\null}}
\def\strutdepth{\dp\strutbox}
\def \sst{\strut\vadjust{\kern-\strutdepth \ssss}}
\def \ssss{\vtop to \strutdepth{
\baselineskip\strutdepth\vss\llap{$\spadesuit\;\;$}\null}}
\def\qed{\hfill\rlap{$\sqcup$}$\sqcap$\par}
\newtheorem*{thm*}{Theorem}
\newtheorem{thm}{Theorem}[section]
\newtheorem{cor}[thm]{Corollary}
\newtheorem{lem}[thm]{Lemma}
\newtheorem{prop}[thm]{Proposition}
\newtheorem{convention}[thm]{Convention}
\theoremstyle{definition}
\newtheorem{defn}[thm]{Definition}
\newtheorem{rem}[thm]{Remark}
\theoremstyle{remark}
\numberwithin{equation}{section}
\begin{document}

\author[K.~Ye]{Kaidi Ye}

\title[Which polynomially growing $\hat \phi$ is geometric ?]
{When is a polynomially growing automorphism of $\FN$ geometric ?}

\begin{abstract} 
The main result of this paper is an algorithmic answer to the question raised in the title, up to replacing the given $\hat \phi \in \Out(\FN)$ by a positive power.

In order to provide this algorithm, it is shown that every polynomially growing automorphism $\hat \phi$ can be represented by an 
iterated Dehn twist on some graph-of-groups $\cal G$ with $\pi_1 \cal G = \FN$. One then uses results of two previous papers \cite{KY01, KY02} as well as some classical results such as the Whitehead algorithm to prove the claim.

\end{abstract}

\subjclass[1991]{Primary 20F, Secondary 20E}
\keywords{graph-of-groups, free group, Dehn twists, Combinatorics of words}

\maketitle

\tableofcontents












\section{Introduction}

Let $\phi$ be an automorphism of a free group $F_n$ of finite rank $n \geq 2$. One says that $\phi$ (or the associated outer automorphism $\hat \phi \in \Out(\FN)$) is {\em geometric} if there is a positive answer to the following:

\medskip
\noindent
{\em Question: Does there exist a surface $S$ with $\pi_1 S \cong F_n$ and a homeomorphism $h: S \to S$ with $h_* = \hat\phi$ ?}

\medskip
 
The main purpose of this paper is to give an algorithmic answer to 
this question, for the case that $\phi$ has polynomial growth, and modulo replacing $\phi$ by some higher iterate.

\smallskip

To do this, we describe in this paper several ``subalgorithms'' which, when properly put together, fulfill this purpose. Some of these sub-algorithms have some interest in themselves, which we describe now:

In section \ref{sec-5} we define iteratively a class of automorphisms of $F_n$, called 
{\em iterated Dehn twist automorphisms (of some level $k \geq 1$)}, 
which are given by a graph-of-groups $\cal G$ with trivial edge groups, and an automorphism







$H: \cal G \to \cal G$ which acts trivially on the underlying graph and induces on each vertex group $G_v$ an 
iterated Dehn twist automorphisms of level $k_v \leq k -1$. For $k = 1$ the edge groups may be cyclic, and the vertex group automorphisms 
and edge group automorphisms
are the identity. Thus for $k = 1$ the resulting automorphism is a Dehn twist automorphism in the traditional sense.
We show (compare Proposition \ref{k-level-rep}):

\begin{prop}
\label{thm1}
Every polynomially growing automorphism $\hat \phi \in \Out(\FN)$ has a positive power 
$\hat{\phi}^t$
which is represented by an 
iterated Dehn twist automorphism $H$ of some level $k \geq 1$.  

All the data for $H$ (including for all lower levels through iteratively passing to the induced vertex group automorphisms) can be derived algorithmically from a relative train track representative of 
$\hat{\phi}$
as given by 
Bestvina-Feighn-Handel
\cite{BFH00}.

It can be derived from \cite{BFH00} that the above exponent $t = t_n \geq 1$ can be determined 
depending only on $n$ and not on the particular choice of $\hat \phi$.
\end{prop}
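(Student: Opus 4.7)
The plan is to proceed by induction on the polynomial degree of $\hat\phi$, using the relative train track machinery of Bestvina--Feighn--Handel \cite{BFH00} as the input at each stage. The very first step is to pass to a power: the image of any polynomially growing automorphism in $GL_n(\Z)$ (acting on the abelianisation) has all eigenvalues roots of unity, and since the finite subgroups of $GL_n(\Z)$ have bounded order (Minkowski), there is an integer $t = t_n$ depending only on $n$ for which $\hat\phi^{t}$ is unipotent polynomially growing (UPG). After replacing $\hat\phi$ by $\hat\phi^{t}$ I may therefore assume $\hat\phi$ is UPG.

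I would then invoke the UPG relative train track representative $f : G \to G$ of $\hat\phi$ from \cite{BFH00}, whose filtration $\emptyset = G_0 \subset G_1 \subset \cdots \subset G_k = G$ adds a single edge $E_i$ at each step with $f(E_i) = E_i \cdot u_i$, where the twistor $u_i$ is a closed edge-path in $G_{i-1}$ fixed up to homotopy by $f$. The maximal level $k$ is bounded above by a function of $n$ (roughly, the rank).

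The iterated Dehn twist $H$ on a graph-of-groups $\mathcal{G}$ is then built by induction on $k$. When $k=1$, $f$ is (after possibly folding the twistor endpoints) a classical Dehn twist along the loop $u_1$, giving the required base case with cyclic edge groups and trivial vertex/edge group automorphisms. For $k > 1$, collapse the subgraph $G_{k-1}$: its connected components become the vertices of $\mathcal{G}$, each carrying as its vertex group the fundamental group of the component, and the top-stratum edges of level $k$ become the edges of $\mathcal{G}$ with trivial edge groups, the twistors $u_i$ being recorded as the ``twist words'' of $H$ at these edges. The restriction of $f$ to each component of $G_{k-1}$ is again UPG with strictly shorter filtration, hence by induction is an iterated Dehn twist of level $\le k-1$ on its own graph-of-groups; these are assembled into the vertex data for $H$. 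By construction $H$ acts trivially on the underlying graph of $\mathcal{G}$.

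Every step is algorithmic: \cite{BFH00} constructs the UPG RTT effectively from any topological representative of $\hat\phi^{t_n}$, reading off the filtration, twistors, and induced vertex group automorphisms is purely combinatorial, and the induction on $k$ terminates in at most $n-1$ rounds because the vertex groups are free of strictly smaller rank. The main obstacle I foresee is book-keeping at the induction step: ensuring that the collapsed graph-of-groups data (which components the endpoints of each top-stratum edge fall into, how loops at a single vertex contribute cyclic edge groups only in the base case, and how the twistor paths transform into the twist words of $H$) is consistent with the rigid definition of ``iterated Dehn twist automorphism'' introduced in Section \ref{sec-5}. This is where unipotency, rather than merely polynomial growth, is crucial: without UPG the edges of the top stratum could be permuted, $H$ would fail to act trivially on the underlying graph of $\mathcal{G}$, and the inductive structure would break down.
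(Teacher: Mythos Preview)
Your overall strategy---pass to a uniform power to obtain a UPG automorphism, take the Bestvina--Feighn--Handel representative, and then peel off the top stratum to build a one-edge graph-of-groups with trivial edge group while recursing on the lower strata---is exactly the paper's approach (see Lemma~\ref{inductive-step} and Proposition~\ref{k-level-rep}). So the architecture is right.

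There is, however, one genuine technical point you have not addressed, and it is precisely the one the paper spends all of Section~\ref{sec-4} on. In the definition used here, an iterated Dehn twist of level $\geq 2$ is a \emph{partial Dehn twist} in the sense of subsection~\ref{sec-gog's}, and that definition requires the underlying graph-of-groups to be \emph{minimal}. When you ``collapse $G_{k-1}$'' and take its connected components as vertex groups, nothing prevents one of those components from being a tree (an \emph{inessential} component): its fundamental group is trivial, the resulting one-edge graph-of-groups is not minimal, and your claim that ``the vertex groups are free of strictly smaller rank'' fails (one component can carry the full rank). The paper fixes this by first converting the BFH representative into a \emph{special topological representative} (Definition~\ref{special-tt}): it gathers all identity edges into the bottom stratum and then uses the sliding operation of Lemma~\ref{sliding-lemma} to slide higher edges off any inessential component before deleting it. Only after this preprocessing is the inductive graph-of-groups guaranteed to be minimal. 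Your proposal acknowledges ``book-keeping'' worries at exactly this spot but does not identify the actual obstruction; you should insert the special-representative step before running your induction.

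A minor correction: your base case is misstated. With the filtration you wrote (each $G_i$ adds one edge), the path $u_1$ lies in $G_0$ and is therefore trivial, so the $k=1$ case is the identity (level~$0$), not a classical Dehn twist. In the paper's normalisation the bottom stratum $\Gamma_1$ already contains \emph{all} identity edges, and the induction variable is the number $m$ of strata of the special representative, not the polynomial degree.
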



We then use our results from \cite{KY01}, \cite{KY02} to derive algorithmically from an 
iterated Dehn twist representative of level $k \geq 2$ either an 
iterated Dehn twist representative of 
strictly 
lower level, or else a conjugacy class $[w] \subset \FN$ that grows at least quadratically under iteration of $\hat \phi$. Thus we obtain 
(compare Theorem \ref{dichotomy0} and Corollary \ref{dichotomy0-cor}):



\begin{thm}
\label{thm2}
Every linearly growing automorphism of $\FN$ has a positive power 
which is 
a Dehn twist automorphism.

More precisely: 
From any iterated Dehn twist representative of some $\hat \phi \in \Out(\FN)$ one can derive algorithmically either the fact that $\hat \phi$ has at least quadratic growth, or else all the data of a graph-of-groups decomposition $\FN \cong \pi_1 \cal G$ as well as a Dehn twist $H: \cal G \to \cal G$ with 
$H_* = 
\hat \phi$.
\end{thm}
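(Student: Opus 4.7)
The plan is to do induction on the level $k$ of an iterated Dehn twist representative, combining Proposition \ref{thm1} with the algorithmic dichotomy from \cite{KY01, KY02} that is already advertised in the passage leading up to Theorem \ref{dichotomy0}.

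I would first dispatch the more precise second statement. Given an iterated Dehn twist representative $H: \cal G \to \cal G$ of level $k \geq 1$ for some $\hat \phi \in \Out(\FN)$: if $k = 1$, then by definition $H$ is already a Dehn twist in the traditional sense and the required graph-of-groups data is in hand. If $k \geq 2$, then on each vertex group $G_v$ the induced automorphism is itself an iterated Dehn twist of strictly smaller level, and I would feed $H$ into the algorithmic procedure from \cite{KY01, KY02}. According to the summary just before Theorem \ref{dichotomy0}, that procedure outputs exactly one of: (a) a conjugacy class $[w] \subset \FN$ whose $\hat \phi$-orbit grows at least quadratically, which is precisely the first alternative of the theorem; or (b) a new iterated Dehn twist representative $H': \cal G' \to \cal G'$ of the \emph{same} $\hat \phi$ of strictly smaller level $k' < k$. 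In case (b) I recurse on $H'$. Since the level strictly decreases, the recursion halts after at most $k$ steps with one of the two desired outputs.

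For the opening sentence, suppose $\hat \phi$ is known to grow linearly. I would first invoke Proposition \ref{thm1} to obtain an iterated Dehn twist representative of some positive power $\hat \phi^t$, algorithmically derived from a relative train track representative via \cite{BFH00}. Running the recursion above on $\hat \phi^t$ in place of $\hat \phi$, observe that linear growth of $\hat \phi$ forces linear growth of $\hat \phi^t$, so alternative (a) cannot be triggered at any stage; hence the recursion terminates at level $1$, producing a Dehn twist representative of $\hat \phi^t$, as required.

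The main obstacle -- and really the combinatorial core of the whole program -- is the dichotomy invoked at each inductive step. Concretely, one must decide from the finite data of $\cal G$, $H$, and the vertex-group automorphisms whether the outermost twist of a level-$k$ representative can be absorbed into a finer graph-of-groups structure realizing $\hat \phi$ at strictly lower level, or whether it interacts with the lower-level vertex-group twists so as to produce unavoidable at-least-quadratic growth; and, in the first case, one must explicitly construct the lower-level representative. This is exactly the content of \cite{KY01, KY02}, so the present theorem reduces to packaging those results inductively, using the strict decrease of the level to guarantee termination.
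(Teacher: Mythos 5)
Your proposal is correct and follows essentially the same route as the paper: the precise statement is the paper's Theorem \ref{dichotomy0} (proved by descending induction on the level, using the dichotomy of \cite{KY02} to either detect quadratic growth or replace sub-twists by traditional Dehn twists and thereby lower the level), and the opening sentence is its Corollary \ref{dichotomy0-cor}, obtained exactly as you do by combining with Proposition \ref{thm1} and ruling out the quadratic alternative. The only nuance the paper makes explicit that you gloss over is that Corollary \ref{q-growth+} applies literally only to level-$2$ pieces (partial Dehn twists relative to local Dehn twists), so the reduction step must be carried out on the innermost level-$2$ sub-twists of the nested structure rather than by feeding the top-level $H$ directly into the \cite{KY02} machinery.
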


In the linearly growing case
one then uses work of Cohen-Lustig \cite{CL02} to derive algorithmically from $H$ and $\cal G$ an {\em efficient} Dehn twist representative of 
$\hat \phi$, and derive 
in section \ref{sec-6} 
from its uniqueness properties (see Theorem \ref{efficient-uniqueness}) the following:

\begin{prop}
\label{efficient-surface}
An efficient Dehn twist $H: \cal G \to \cal G$ represents a geometric automorphism of $\FN$ if and only if for every vertex $v$ of $\cal G$ the 
family of 
``edge generators'' $f_{e_i}(g_{e_i})$, where 
$e_i$ is any edge with terminal vertex $v$ and $g_e$ is a generator of $G_e \cong \Z$, 
is 
a boundary family in the vertex group $G_v$. 
\end{prop}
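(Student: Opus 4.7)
The plan is to prove both implications by moving between the combinatorics of $\cal G$ and the geometry of a surface decomposition, using the uniqueness of efficient Dehn twist representatives (Theorem~\ref{efficient-uniqueness}) as the bridge between the two pictures.

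For the `if' direction, suppose that at every vertex $v$ the collection $\{f_{e_i}(g_{e_i})\}$ is a boundary family in $G_v$. Then for each $v$ there exists a compact surface $S_v$ with $\pi_1 S_v \cong G_v$ whose boundary components realize the prescribed conjugacy classes. Glue the $S_v$ pairwise along matching boundary circles as prescribed by the edges of $\cal G$ to obtain a compact surface $S$; by Van Kampen's theorem the graph-of-groups decomposition of $\pi_1 S$ induced by the gluing curves is precisely $\cal G$, so $\pi_1 S \cong \FN$. Define $h : S \to S$ as the product of Dehn twists along the gluing curves with exponents read off from the twistor data of $H$. A Van Kampen computation on vertex and edge spaces then gives $h_* = \hat\phi$, establishing geometricity.

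For the `only if' direction, assume $\hat\phi = h_*$ for some homeomorphism $h$ of a compact surface $S$ with $\pi_1 S \cong \FN$. Since $H$ is a level-one Dehn twist, $\hat\phi$ grows linearly, so after passing to a suitable positive power of $h$ the Nielsen--Thurston classification reduces $h$ to a multi-twist along a family $\CC$ of pairwise disjoint, essential, non-peripheral simple closed curves. Cutting $S$ along $\CC$ produces a graph-of-groups decomposition $\FN \cong \pi_1 \cal G'$ with cyclic edge groups, in which the edge generators are tautologically boundary components of the complementary subsurfaces and hence form boundary families. The multi-twist $h$ corresponds to a Dehn twist $H'$ on $\cal G'$; after discarding parallel or trivially twisted curves, $H'$ is efficient. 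Applying the uniqueness part of Theorem~\ref{efficient-uniqueness} to the two efficient representatives $H$ and $H'$ of $\hat\phi$ identifies $\cal G$ with $\cal G'$, which transfers the boundary-family property from $\cal G'$ to $\cal G$.

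The main obstacle is the `only if' direction, specifically the verification that the multi-twist extracted from $h$ produces an \emph{efficient} Dehn twist on $\cal G'$ so that Theorem~\ref{efficient-uniqueness} can be invoked. This requires matching the efficiency conditions of \cite{CL02} against the surface picture (no trivial twist components, no redundant gluing curves, correct compatibility of edge group orientations with the twist powers). Once efficiency is secured, the uniqueness theorem identifies the two graph-of-groups structures essentially mechanically, and the rest of the argument is a bookkeeping comparison of the boundary circles of the $S_v$ with the edge generators at each vertex.
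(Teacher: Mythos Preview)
Your argument is essentially the paper's own proof (given there as Proposition~\ref{boundary-set-prop}): the `if' direction tubes together surfaces $S_v$ and reads off the multi-twist from the twistor data, while the `only if' direction invokes Nielsen--Thurston, verifies that the resulting surface Dehn twist is efficient (the paper isolates exactly this check as Lemma~\ref{geometric-efficient}), and then applies Theorem~\ref{efficient-uniqueness}. One small slip to fix: after replacing $h$ by a power $h^m$, the Dehn twist $H'$ represents $\hat\phi^{\,m}$ rather than $\hat\phi$, so uniqueness must be applied to $H^m$ versus $H'$ (as the paper does with $D^t$); this is harmless because the family $\{f_{e_i}(g_{e_i})\}$ at each vertex depends only on the underlying graph-of-groups and not on the twist exponents.
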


Here a {\em boundary family} 
is a family of elements in a free group $F_m$ which satisfy up to conjugation the sufficient and necessary condition that are well 
known
for elements which represent boundary components of an orientable or non-orientable surface (see 
Definition \ref{bdy-family}). 
This condition in turn can be decided algorithmically for any finite family of elements by the Whitehead algorithm on $G_v$, so that we obtain
(see section \ref{sec-6}):

\begin{cor}
\label{CLW}
For any Dehn twist $H: \cal G \to \cal G$ on a free group $\pi_1 \cal G =\FN$ it can be decided algorithmically whether 
$H_*$ 
is geometric or not.
\end{cor}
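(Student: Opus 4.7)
The plan is to reduce the question to a finite list of instances of the Whitehead algorithm, one per vertex of $\cal G$, by first passing to an efficient Dehn twist representative and then invoking Proposition \ref{efficient-surface}.

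First I would feed the given Dehn twist $H:\cal G\to\cal G$ into the algorithm of Cohen--Lustig \cite{CL02}, which outputs an efficient Dehn twist $H':\cal G'\to\cal G'$ with $H'_*=H_*$ in $\Out(\FN)$. This step is purely mechanical once the data of $\cal G$ and the twistors on each edge are on the table, so no obstacle arises here.

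Next, for each vertex $v$ of $\cal G'$, I would enumerate the incoming edges $e_i$ at $v$ and, using the boundary monomorphisms $f_{e_i}:G_{e_i}\to G_v$ stored in the graph-of-groups data, write down the finite family of conjugacy classes $\{[f_{e_i}(g_{e_i})]\}$ in $G_v\cong F_{m_v}$. By Proposition \ref{efficient-surface}, geometricity of $H_*$ is equivalent to each of these families being a boundary family in the corresponding vertex free group.

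The remaining step is to decide, for a given finite family of conjugacy classes in a free group $F_m$, whether it is a boundary family in the sense of Definition \ref{bdy-family}. Since the defining condition is the classical ``surface boundary'' condition on a tuple in $F_m$, it is invariant under the action of $\Aut(F_m)$ and depends only on the Whitehead orbit of the family. The Whitehead algorithm decides, in finite time, whether two finite tuples of conjugacy classes lie in the same $\Aut(F_m)$--orbit, and in particular whether a given tuple is Whitehead-equivalent to one of the standard boundary tuples of an orientable or non-orientable surface of the relevant Euler characteristic (of which there are only finitely many candidates, determined by $m$ and the cardinality of the family). Running this test at each vertex and taking the conjunction of the answers decides geometricity.

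The only real obstacle is making sure that the standard boundary-family condition really is decidable by the Whitehead algorithm, i.e.\ that the characterization in Definition \ref{bdy-family} reduces to an $\Aut(F_m)$--orbit problem on finite tuples, and that the finitely many reference tuples (coming from the possible surface types with the given boundary count and fundamental group $F_m$) can be listed effectively. Once that bookkeeping is in place, the corollary follows by combining Cohen--Lustig's efficiency algorithm, Proposition \ref{efficient-surface}, and the Whitehead algorithm vertex by vertex.
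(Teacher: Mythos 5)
Your proposal is correct and follows essentially the same route as the paper: pass to an efficient representative via Cohen--Lustig (the paper's Subalgorithm I), apply Proposition \ref{boundary-set-prop} (= Proposition \ref{efficient-surface}) vertex by vertex, and decide the boundary-family condition by running Whitehead's algorithm against the finitely many standard boundary collections from Theorem \ref{boundary-collection} (which is exactly the paper's Corollary \ref{bdy-algorithm} feeding into Corollary \ref{cor-b-sets}). The ``bookkeeping'' worry you flag at the end is resolved in the paper precisely as you suggest, so no gap remains.
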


Putting all of the above together now gives:

\begin{thm}
\label{geometricYN}
There exists an algorithm which decides, for any given polynomially growing automorphism $\hat \phi \in \Out(\FN)$, whether $\hat \phi^{t_n}$ is geometric or not.
\end{thm}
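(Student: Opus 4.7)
The plan is to chain together the algorithms already stated in the introduction. Given polynomially growing $\hat \phi \in \Out(\FN)$, I would first run the Bestvina--Feighn--Handel algorithm \cite{BFH00} to obtain a relative train track representative. Proposition \ref{thm1} then converts this representative into an iterated Dehn twist automorphism $H$ of some level $k \geq 1$ representing $\hat \phi^{t_n}$, where the exponent $t_n$ is supplied uniformly in $n$.

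Next I would feed $H$ into Theorem \ref{thm2}. Algorithmically one of two things happens: either the procedure certifies that $\hat \phi^{t_n}$ grows at least quadratically, in which case I output ``not geometric''; or it produces a graph-of-groups $\cal G'$ together with a level one Dehn twist $H': \cal G' \to \cal G'$ satisfying $H'_* = \hat \phi^{t_n}$, in which case I apply Corollary \ref{CLW} directly to $H'$ and obtain a yes/no answer for geometricity.

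The only step requiring justification beyond the three cited results is the declaration ``not geometric'' upon detecting quadratic growth. This rests on the Nielsen--Thurston classification: any geometric polynomially growing outer automorphism arises from a surface homeomorphism whose Thurston decomposition contains no pseudo-Anosov pieces (those would force exponential growth), and hence, after raising to a power that kills all finite-order components, becomes a pure multi-twist of at most linear growth. Since the genus and the number of boundary components of any surface $S$ with $\pi_1 S \cong \FN$ are bounded in terms of $n$, the torsion orders appearing in the relevant mapping class groups are uniformly bounded in $n$, so the exponent $t_n$ furnished by Proposition \ref{thm1} may be arranged (by passing to a fixed multiple if necessary) to be a common multiple of every such torsion order. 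With this choice, $\hat \phi$ geometric and polynomially growing forces $\hat \phi^{t_n}$ to be a Dehn twist automorphism of linear growth, so quadratic growth of $\hat \phi^{t_n}$ rules out geometricity and the output in that branch is correct.

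The main obstacle is therefore not conceptual but bookkeeping: I must verify that the uniform exponent from the relative train track machinery of Proposition \ref{thm1} can be matched to the uniform torsion bound needed to flatten every geometric polynomially growing representative into a multi-twist. Once this compatibility is secured, the pipeline Proposition \ref{thm1} $\to$ Theorem \ref{thm2} $\to$ Corollary \ref{CLW} is a complete decision procedure, which is precisely the statement of the theorem.
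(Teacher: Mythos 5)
Your proposal is correct and follows essentially the same route as the paper, which likewise just chains Proposition \ref{thm1} $\to$ Theorem \ref{thm2} $\to$ Corollary \ref{CLW} (the final section merely interleaves these steps stratum by stratum for efficiency). The only remark is that your ``main obstacle'' about matching $t_n$ to torsion orders is not actually needed: since the theorem asks about geometricity of $\hat\phi^{t_n}$ itself, and the degree of polynomial growth is unchanged by passing to powers, Corollary \ref{corquadratic}(1) already yields directly that at least quadratic growth rules out geometricity, with no exponent bookkeeping.
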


The concrete terms of this algorithm are presented in detail in the last section of this paper.

\subsection*{Acknowledgements}
${}^{}$

This paper is part of my PhD thesis, 
and I would like to thank 
my advisors, Arnaud Hilion and Martin Lustig, 
for their advice and encouragement.


\section{
Preliminaries}

Throughout this paper $\FN$ will denote a free group of finite rank $n \geq 2$.

An outer automorphism $\hat{\phi} \in Out(F_n)$ is said to be {\em geometric} if it is induced by a homeomorphism on a surface. In other words, there exist a surface homeomorphism $h: S \rightarrow S$ and an isomorphism $\theta: \pi_{1}(S) \rightarrow F_n$ 
such that the following diagram commutes up to inner automorphisms,
\[
\begin{CD}
\pi_{1}(S) @> h_*>> \pi_{1}(S) \\
@V\theta VV @V \theta VV \\
F_n @>> \phi > F_n
\end{CD}
\]
where 
$\phi \in \Aut(\FN)$ is some representative of $\hat \phi$.

\subsection{Maps of Graphs and Topological Representatives}

${}^{}$



In this paper a {\em graph} $\Gamma$ is always assumed to be finite and connected, unless otherwise stated. We 
denote the set of its vertices by $V(\Gamma)$, and the set of its oriented edges by $E(\Gamma)$.
For any 
edge $e \in E(\Gamma)$ we denote by $\bar e \in E(\Gamma)$ the same edge with reverted orientation. Furthermore, $\tau(e)$ denotes the terminal vertex of $e$, and $\iota(e)$ its initial vertex.

A {\em path} (or {\em edge path}) in $\Gamma$ is either a single vertex, in which case the path is said to be {\em trivial}, or a non-empty sequence of edges $e_1 e_2 \dots e_k$ such that $\tau(e_i)=\iota(e_{i+1})$ for $1 \leq i \leq k-1$. 
A path is {\em reduced} if it does not contain a subpath of the form $e \bar{e}$ for some edge $e \in E(\Gamma)$.

A {\em graph map} $f: \Gamma \to \Gamma'$ is a map which sends vertices to vertices and edges to edge paths, which may or may not be reduced.

\begin{defn}
A {\em marked graph} refers to a pair $(\Gamma, \theta)$ where $\Gamma$ is a graph and the {\em marking} $\theta: \FN \overset{\cong}{\longrightarrow} \pi_1\Gamma$ is an isomorphism.
\end{defn}

\begin{defn}[Topological Representative]
Let $\hat{\phi} \in Out(F_n)$ be an outer automorphism of $\FN$. 
A {\em topological representative of $\hat{\phi}$ with respect to a marking 
$\theta: \FN \overset{\cong}{\longrightarrow} \pi_1\Gamma$}
is a homotopy equivalence $f: \Gamma \rightarrow \Gamma$ which determines the outer automorphism $\hat \phi$ on its fundamental group $\pi_{1}\Gamma$ (i.e. $f_* = \theta \,  \hat \phi \, \theta^{-1}$). Furthermore one requires 
that $f(e)$ is reduced and not a vertex, for every $e \in E(\Gamma)$.
\end{defn}

A {\em filtration} for a topological representative $f: \Gamma \rightarrow \Gamma$ is an increasing sequence of invariant subgraphs 
$\Gamma_0 \subset \Gamma_1 \subset \Gamma_2 \subset \dots \subset \Gamma_m=\Gamma$. Note that each $\Gamma_i$ is not necessarily connected. 
Each 
(possibly non-connected) 
subgraph $H_i= cl(\Gamma_i \smallsetminus \Gamma_{i-1})$ is referred to as the {\em i-th stratum}.
We usually assume that the edges of $\Gamma$ are labelled so that those in $H_i$ have smaller label than those in $H_{i+1}$.




\medskip

Bestvina-Handel \cite{BH92} (later improved by Bestvina-Feighn-Handel \cite{BFH00}) have shown that every automorphism $\hat \phi \in \Out(\FN)$ has a topological representative with very strong further properties. These {\em relative train track representatives} have strata $H_i$ which are either exponentially growing and 
have a ``train track'' property, or else they are polynomially growing
(relative to lower train track strata). 
Since in this paper we are only concerned with polynomially growing $\hat \phi$, we restrict ourselves here to quote a special case of their general result.

\begin{thm}[\cite{BFH00} Theorem~5.1.5]
\label{top-rep}
For every  polynomially-growing outer automorphism $\hat{\phi} \in Out(F_{n})$ one can determine algorithmically 
for a positive power 
$\hat \phi^t$ 
of $\hat{\phi}$ 
a topological representative $f: \Gamma \rightarrow \Gamma$ with a filtration $V(\Gamma) =\Gamma_{0} \subset \Gamma_{1} \subset \ldots \subset \Gamma_{m} = \Gamma$, which has the following properties:
\begin{enumerate}
\item the graph $\Gamma$ has no valence-one vertex;
\item all vertices $v \in V(\Gamma)$ are fixed by the map $f$;
\item every stratum $H_{i}$ 
consists of a single edge $e_i$ such that $f(e_{i})=e_{i}u_{i}$, where $u_{i} \subset \Gamma_{i-1}$ is a closed path.
\end{enumerate}
\end{thm}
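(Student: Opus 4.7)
The plan is to invoke the general relative train track existence theorem of Bestvina--Feighn--Handel \cite{BFH00} directly, and to read off the three itemized properties from it in the special case at hand. In the general BFH setting, a topological representative with filtration $\Gamma_0 \subsetneq \Gamma_1 \subsetneq \cdots \subsetneq \Gamma_m$ has strata that are either exponentially growing (EG) or non-exponentially growing (NEG). The hypothesis that $\hat\phi$ has polynomial growth forces every stratum to be NEG, since a single EG stratum would produce conjugacy classes with exponential length growth. The NEG part of the general construction already produces each stratum as a single oriented edge $e_i$ whose image factors as $f(e_i)=e_i u_i$ with $u_i$ an edge path in $\Gamma_{i-1}$, which is essentially the content of item (3).

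To upgrade this to the precise form stated, I would pass to a suitable positive power $\hat\phi^t$ in order to simultaneously achieve the following: first, that the induced permutation of vertices is trivial, so every $v\in V(\Gamma)$ is fixed by $f$, giving (2); second, that the endpoints $\iota(e_i)$ and $\tau(e_i)$ are both fixed and that the tail path $u_i$ becomes a \emph{closed} path based at $\tau(e_i)=\iota(e_i)$, as required in (3). The uniform bound $t=t_n$ depending only on $n$ (anticipated in Proposition \ref{thm1}) would be obtained from bounding the orders of the relevant vertex/stratum permutations in terms of the combinatorics of $\Gamma$, whose size is controlled by $n$. Property (1) is then arranged by a standard clean-up: any pendant edge at a valence-one vertex is invariant once all vertices are fixed and $f(e_i)=e_iu_i$, so iteratively collapsing maximal invariant pendant forests produces a topological representative with no valence-one vertex, without destroying (2) or (3).

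The entire procedure is algorithmic: BFH provide an algorithm constructing the relative train track representative to begin with, and the subsequent steps (computing vertex/stratum permutation orders, taking the appropriate power, and collapsing pendant forests) are finite combinatorial operations on the output. The main obstacle in a self-contained argument would be the construction of \emph{any} relative train track representative in the first place, which is the deep technical core of \cite{BFH00}; once that is taken as a black box, the statement reduces to bookkeeping in the NEG case together with the power-taking and tidying just described.
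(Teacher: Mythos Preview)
The paper does not give its own proof of this statement: it is quoted verbatim as a special case of \cite[Theorem~5.1.5]{BFH00}, so there is nothing to compare against beyond the citation itself. Your sketch of how the polynomially growing case falls out of the general BFH machinery is the right explanation, and is in the same spirit as what the paper presumes the reader will supply.

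One slip worth correcting: you write that the tail path $u_i$ becomes ``a closed path based at $\tau(e_i)=\iota(e_i)$''. The edge $e_i$ need not be a loop. The closedness of $u_i$ follows automatically once all vertices are fixed: since $f(e_i)$ must start at $\iota(e_i)$ and end at $\tau(e_i)$, and $e_i$ already does this, the suffix $u_i$ runs from $\tau(e_i)$ back to $\tau(e_i)$. No additional power is needed for that. Also, property~(1) is in practice already built into the BFH output (their marked graphs have no valence-one vertices from the outset), so your pendant-forest collapse, while harmless, is not really needed.
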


It follows from going through the material in \cite{BFH00} that the exponent $t \geq 1$ in the above theorem can be chosen a priori, i.e. $t$ does not depend on $\hat \phi$ but only on the rank $n$ of the free group $\FN$, and that this exponent $t=t_n$ can be determined algorithmically for any $n \geq 2$.



\subsection{Graph-of-groups and Dehn twists}
\label{sec-gog's}

${}^{}$

In this subsection we will briefly recall some basic definitions of graph-of-groups and Dehn twists,
which will be used in later sections. We refer the readers to  \cite{CL02}, \cite{Serre}, \cite{KY01}, \cite{KY02} for more detailed information and discussions.
 
A {\em graph-of-groups} $\cal{G}=(\Gamma, (G_v)_{v \in V(\Gamma)}, (G_e)_{e \in E(\Gamma)}, (f_e)_{e\in E(\Gamma)})$ consists of a finite connected graph $\Gamma = \Gamma(\cal G)$, 
a {\em vertex group} $G_v$ for each vertex $v$ of $\cal G$ (by which we mean ``of $\Gamma(\cal G)''$), an {\em edge group} $G_e$ for each edge $e$ of $\cal G$, and a family of {\em edge monomorphisms} $f_e: G_e \to G_{\tau(e)}$.
%
%
For every $e \in E(\Gamma)$, we require $G_e=G_{\bar{e}}$. 

To $\cal G$ there is canonically associated a {\em path group} $\Pi(\cal G)$ which is the free product of all $G_v$ with the free group over $E(\Gamma)$, subject to the relations $t_{\bar e} = t_e^{-1}$ and $f_{\bar e}(g) = t_e f_e(g) t_e^{-1}$ for all $e \in E(\Gamma)$ and $g \in G_e$. For any $v \in V(\Gamma)$ there is a well defined {\em fundamental group} $\pi_1(\cal G, v) \subset \Pi(\cal G)$, and they are all naturally conjugate to each other in $\Pi(\cal G)$.


A {\em graph-of-groups automorphism} $H: \cal{G} \rightarrow \cal{G}$ is given by of a graph automorphism $H_{\Gamma}: \Gamma \rightarrow \Gamma$, a group isomorphism $H_v : G_{v} \rightarrow G_{H_{\Gamma}(v)} $ for each vertex $v$ of $\cal G$, a group isomorphism $H_e: G_e \rightarrow G_{H_{\Gamma}(e)}$ for each edge $e$ of $\cal G$,  and a correction term $\delta(e) \in G_{\tau(H_{\Gamma}(e))}$ for every edge $e$ of $\cal G$ which satisfies
 $$H_{\tau(e)}f_{e}=ad_{\delta(e)}f_{H_{\Gamma}(e)}H_{e}$$
where $ad_g: \FN \to \FN$ denotes conjugation with $g$. 

The isomorphism $H$ induces canonically isomorphisms $H_*: \Pi(\cal G) \to \Pi(\cal G)$ and $H_{*v}: \pi_1(\cal G,v) \to \pi_1(\cal G, v)$ as well as an outer automorphism $\hat H$ of $\pi_1 \cal G$ which is independent of the choice of the base point $v$. 
The latter is sometimes also denoted by $H_*$.

\medskip

A {\em Dehn twist} $D: \cal{G} \rightarrow \cal{G}$ on graph-of-groups $\cal G$
(defined in \cite{KY01} as ``general Dehn twist'')
is an automorphism of $\cal G$ where the graph automorphism $D_{\Gamma}$,
all vertex groups automorphisms $D_v$ and all edge groups automorphisms $D_e$ are the identity. Furthermore, one requires that for each edge $e$ the correction term $\delta(e)$ is contained in the centralizer of $f_e(G_e)$ in $G_{\tau(e)}$.

For free groups the last condition implies that for a non-trivial correction term 
$\delta(e)$
the edge group $G_e$ 
must be either trivial or infinite cyclic. Both cases occur when practically working with {\em Dehn twist automorphisms} of $\FN$, i.e. automorphisms $\hat \phi \in \Out(\FN)$ which satisfy for some identification $\FN = \pi_1 \cal G$ that $\hat \phi = \hat D$.

For every edge $e$ of $\cal G$ with $G_e \cong \Z$ one calls the element $z_e: = \gamma_{\bar e} \gamma_e^{-1}$ the {\em twistor} of $e$, where $\gamma_e \in G_e$ is defined by $f_e(\gamma_e) = 
\delta(e)$.
If $G_e \cong \Z$ for all edges $e$ of $\cal G$ then $D$ is called in \cite{KY01} a {\em classical} Dehn twist.
In this case we chose
%
for each edge $e$ of $\cal G$
an {\em edge generator} $g_{e}$ of the edge group $G_e$ and determine a {\em twist exponent} $n(e) \in \Z$ such that $z(e)=g_{e}^{n(e)}$. We use the convention that $g_e$ is always picked so that $n(e) \geq 0$. 


A {\em partial Dehn twist relative to a subset of vertices $\cal{V} \subset V(\Gamma(\cal G))$} is a graph-of-groups isomorphism $H: \cal G \to \cal G$ on a graph-of-groups $\cal G$ with trivial edge groups, which 
satisfies all conditions of a Dehn twist except that the vertex group automorphisms $H_v$ for any $v \in \cal{V}$ may not be the identity. We also require for such a partial Dehn twist that $\cal G$ is {\em minimal}, i.e. there is no proper subgraph-of-groups $\cal G'$ of $\cal G$ where the injection induces an isomorphism $\pi_1 \cal G' \cong \pi_1 \cal G$.\footnote{\,Unfortunately this minimality condition was omitted by mistake from Definition 3.7 of \cite{KY02}}


\subsection{Efficient Dehn twists}
\label{sec-efficient}

${}^{}$

In \cite{CL02} Cohen-Lustig introduced for free groups $\FN \cong \pi_1 \cal G$ a particular class of Dehn twists which have rather special and nice properties:

\begin{defn}[Efficient Dehn twist \cite{CL02}]
\label{efficient-D-twists-defn}
A classical Dehn twist $D=D(\cal{G},(z_{e})_{e \in E(\cal{G})})$ is said to be {\it efficient} if the following conditions are satisfied:
\begin{enumerate}
\item $\cal{G}$ is {\it minimal}: if $v = \tau(e)$ is a valence-one vertex, then the edge homomorphism $f_{e}: G_{e}\rightarrow G_{v}$ is not surjective.
\item No {\it invisible vertex}: there is no valence-two vertex $v=\tau(e_{1})=\tau(e_{2})$ $(e_{1} \neq e_{2})$ such that both edge maps $f_{e_{i}}: G_{e_{i}} \rightarrow G_{v}$ $(i=1,2)$ are surjective. 
\item No {\it unused edge}: for every $e \in E(\Gamma)$ the twistor satisfies $z_{e} \neq 1$ (or equivalently $\gamma_{e} \neq \gamma_{\bar{e}}$).
\item No {\it proper power}: if $r^{p} \in f_{e}(G_{e})$ $(p \neq 0)$ then $r \in f_{e}(G_{e})$, for all $e \in E(\Gamma)$.
\item If $v= \tau(e_{1})=\tau(e_{2})$, then $e_{1}$ and $e_{2}$ are not {\em positively bonded}:  for any integers $n_1, n_2 \geq 1$ the elements $f_{e_{1}}(z_{e_{1}}^{n_1})$ and  $f_{e_{2}}(z_{e_{2}}^{n_2})$ are not conjugate in $G_{v}$.
\end{enumerate}
\end{defn}







The following result of Cohen-Lustig  \cite{CL01, CL02} is used crucially in section \ref{sec-6} below.

\begin{thm}[\cite{CL02}]
\label{efficient-uniqueness}
For two efficient Dehn twists $D=D(\cal{G},(z_{e})_{e \in E(\Gamma)})$ and $D^{\prime}=D(\cal{G^{\prime}},(z_{e})_{e \in E(\Gamma^{\prime})})$ one has 
$\hat{D}=\hat{hD'h^{-1}} \in Out(\pi_1(\cal{G}))$ for some isomorphism $h: \pi_1(\cal{G}) \rightarrow \pi_1(\cal{G}')$ if and only if there is a graph-of-groups isomorphism $H: \cal{G} \rightarrow \cal{G}'$ which induces the isomorphism $h$ up to inner automorphism and which takes twistors to twistors, i.e. $H_{e}(z_e)=z_{H_{\Gamma(\cal G)}(e)}$ for all $e \in E(\Gamma(\cal G))$.



\qed
\end{thm}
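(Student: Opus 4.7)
The plan is to treat the two directions separately. The \emph{if} direction is a direct computation: given a graph-of-groups isomorphism $H: \cal G \to \cal G'$ satisfying $H_{e}(z_{e}) = z_{H_{\Gamma}(e)}$ on every edge, I would expand both compositions $H_{*} \circ D_{*}$ and $D'_{*} \circ H_{*}$ in the path groups $\Pi(\cal G)$ and $\Pi(\cal G')$ using the defining relations $t_{\bar e} = t_{e}^{-1}$ and $f_{\bar e}(g) = t_{e} f_{e}(g) t_{e}^{-1}$, and verify the equality edge by edge from the twistor-preservation identity together with the correction-term formula $H_{\tau(e)} f_{e} = \text{ad}_{\delta(e)} f_{H_{\Gamma}(e)} H_{e}$. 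Passing to the fundamental group yields $\hat D = \hat{h D' h^{-1}}$ with $h = H_{*}$, as required.

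The \emph{only if} direction is where the five efficiency conditions do the real work. The strategy is to reconstruct the triple $(\cal G, (f_{e}), (z_{e}))$ intrinsically from $\hat D$, using each efficiency clause at the step where it is needed. First I would recover the vertex groups: since a classical Dehn twist acts on $\pi_{1} \cal G$ with polynomial growth of degree at most one, the conjugacy classes of elements growing only boundedly under iterates of $\hat D$ are precisely the conjugates of elements of vertex groups, and the maximal such subgroups recover the $G_{v}$. Here \emph{minimality} and the \emph{no invisible vertex} condition ensure that this recovery produces precisely the vertices of $\Gamma(\cal G)$, neither collapsing nor artificially splitting them.

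Next I would recover each edge from its twistor. For every edge $e$ the element $f_{e}(z_{e}) \in G_{\tau(e)}$ lies in the cyclic subgroup $f_{e}(G_{e})$. The \emph{no unused edge} condition guarantees $f_{e}(z_{e}) \neq 1$, so this datum is nontrivial; \emph{no proper power} ensures that $f_{e}(G_{e})$ is determined canonically by $f_{e}(z_{e})$ without root ambiguity; and \emph{no positive bonding} rules out the possibility that two distinct edges incident to a common vertex could yield conjugate twist-images in the vertex group. Under these conditions each edge $e$ of $\cal G$ corresponds bijectively to a well-defined pair (cyclic subgroup of $G_{v}$, twist exponent) read off from $\hat D$ and $G_{v}$, and analogously for $\cal G'$. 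The isomorphism $h$ then carries the data on the $\cal G$ side to the data on the $\cal G'$ side, and assembling these correspondences produces a graph-of-groups isomorphism $H: \cal G \to \cal G'$ which, by construction, takes twistors to twistors.

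The main obstacle I anticipate is global consistency: while each vertex group and each edge-image can be identified locally, the local correspondences produced by $h$ must glue into an actual graph-of-groups isomorphism --- in particular, $h$ must not send two edges meeting at a single vertex of $\cal G$ to edges meeting at different vertices of $\cal G'$. Establishing this rigidity is exactly where the combined force of the efficiency package is essential. The cleanest route I see passes through the Bass-Serre trees $T_{\cal G}$ and $T_{\cal G'}$: one shows that under efficiency the invariant tree of $\hat D$ is determined up to $\pi_{1} \cal G$-equivariant isomorphism, and then the required $H$ descends from the induced equivariant tree isomorphism, with the twistor-preservation property following automatically from the way twistors are encoded in the action of $\hat D$ on the tree.
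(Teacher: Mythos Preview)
The paper does not give its own proof of this statement: the theorem is quoted from Cohen--Lustig \cite{CL02} and carries only a \qed{} box. So there is no in-paper argument to compare your proposal against; the result is used as a black box.

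Regarding your sketch itself: the \emph{if} direction is indeed a routine verification. For the \emph{only if} direction, your final paragraph identifies the correct mechanism --- the efficient Dehn twist determines its Bass--Serre tree up to equivariant isomorphism, and $H$ is then read off from the tree isomorphism. This is essentially the strategy of \cite{CL01, CL02}, where the tree is characterized intrinsically as the unique ``very small'' simplicial $\FN$-tree fixed by $\hat D$ in the boundary of outer space. Your earlier paragraphs, which attempt to recover the graph combinatorially from growth data and twistor conjugacy classes, are heuristically reasonable but would be hard to make rigorous on their own: for instance, the claim that ``the maximal such subgroups recover the $G_v$'' already presupposes the tree-uniqueness you postpone to the end, since without it one cannot rule out that $h$ carries a vertex group of $\cal G$ to a subgroup of $\pi_1 \cal G'$ that is elliptic but not a full vertex stabilizer. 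The honest argument passes through the tree from the outset rather than invoking it as a repair step.
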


\subsection{Partial Dehn twists relative to local Dehn twists}
\label{partial-rel-local}

${}^{}$

In \cite{KY02} 
the concept of a
{\em partial Dehn twist relative to a family of local Dehn twists} has been introduced: This is a a partial Dehn twist $D: \cal G \to \cal G$ relative to a subset of vertices $\cal V \subset V(\cal G)$ as defined above, with the additional specification that on any $v \in \cal V$ the graph-of-groups automorphism $D$ is given by a Dehn twist automorphism $D_v: G_v \to G_v$.

The following previous result of the author is crucially used in section~\ref{sec-5} below:

\begin{cor}[Corollary 1.2 of \cite{KY02}]
\label{q-growth+}
Let $\hat \phi \in Out(\FN)$ be represented by a partial Dehn twist 
relative to a family of local Dehn twists.

Then either $\hat \phi$ is itself a Dehn twist automorphism, or else $\hat \phi$ has at least quadratic growth.
\end{cor}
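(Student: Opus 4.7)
The plan is to analyze the iterates $\hat\phi^t$ on suitable conjugacy classes of $\FN$, splitting into two cases according to whether the edge correction terms of the representing partial Dehn twist $D : \cal G \to \cal G$ are compatible with the local Dehn twist structures. The data to keep in mind: $\cal G$ has trivial edge groups; for each $v \in \cal V$ the local Dehn twist $D_v : G_v \to G_v$ is itself given by a graph-of-groups $\cal G_v$ with $\pi_1 \cal G_v = G_v$ and twistors $z_{e'}^{(v)}$ on its edges; and each correction term $\delta(e)$ at an edge $e$ with $\tau(e) = v \in \cal V$ is an element of $G_v = \pi_1 \cal G_v$.

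The natural construction is a \emph{refinement} of $\cal G$: replace each $v \in \cal V$ by its local graph-of-groups $\cal G_v$ to obtain a global graph-of-groups $\cal G'$ with $\pi_1 \cal G' = \FN$. For each edge $e$ of $\Gamma(\cal G)$ with $\tau(e) = v \in \cal V$, the attachment requires a vertex $v'$ of $\cal G_v$ whose vertex group contains $\delta(e)$ up to conjugation in $G_v$. If such a compatibility can be achieved at every pair $(e, v)$, the $\cal G_v$-twistors together with the reinterpreted $\delta(e)$'s assemble into a global Dehn twist $D' : \cal G' \to \cal G'$ with $\hat D' = \hat\phi$; the centralizer conditions at the new interface edges are automatic because the original $\cal G$ has trivial edge groups. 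This yields the first alternative of the dichotomy.

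If the compatibility fails for some pair $(e, v)$ with $v \in \cal V$, then $[\delta(e)]$ is not a $D_v$-fixed conjugacy class in $G_v$, so by the classical linear growth of Dehn twists on non-fixed conjugacy classes the length of $D_v^t(\delta(e))$ grows linearly in $t$. On the other hand, direct computation of $D^t$ on an edge letter $t_e$ yields an insertion built from $t$ factors of the form $D_v^i(\delta(e))$ for $i = 0, 1, \ldots, t-1$. Since the $i$-th factor has length of order $i$, the total length of the insertion is of order $\sum_{i=0}^{t-1} i = \Theta(t^2)$. Choosing a conjugacy class $[w]$ that traverses $e$ non-trivially in the graph-of-groups normal form and arranging the adjacent vertex-group data so that the insertion cannot cancel with it, this $\Theta(t^2)$ estimate descends to the cyclic length of $\hat\phi^t([w])$ in $\FN$, yielding the required quadratic growth.

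The main obstacle will be verifying that no cancellation destroys the quadratic lower bound in the second case. I would handle this by passing to a suitable power of $\hat\phi$ so that each $D_v$ may be assumed efficient in the sense of Cohen-Lustig, and by invoking the minimality condition built into the definition of a partial Dehn twist; these together ensure that the nested iterates remain cyclically reduced in the graph-of-groups normal form and hence do not collapse. The first case reduces to routine bookkeeping that $D'$ satisfies the axioms of a Dehn twist on $\cal G'$ once the compatibility gluing is in place.
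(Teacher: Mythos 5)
Your overall strategy---blow up each $v \in \cal V$ into its local graph-of-groups $\cal G_v$ and decide the dichotomy by whether the correction terms $\delta(e)$ can be absorbed into the refined structure---is the right one (it is essentially the blow-up machinery of \cite{KY01, KY02}; note the present paper does not reprove this corollary but imports it, and its refined form appears as Corollary \ref{rel-D-twists} together with Subalgorithms II and III). However, you have drawn the dividing line of the dichotomy with the wrong equivalence relation, and this creates a genuine gap. Your compatibility condition is that $\delta(e)$ be \emph{ordinarily} conjugate in $G_v$ into a vertex group of $\cal G_v$. The correct condition---the one the paper calls ``locally zero'' in Subalgorithm II---is that $\delta(e)$ be $D_v$-\emph{twisted} conjugate in $\Pi(\cal G_v)$ to an element of $\cal G_v$-length $0$, i.e.\ $\delta(e) = x\, g_0\, D_v(x)^{-1}$ with $g_0$ in a vertex group of $\cal G_v$. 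The reason is visible in your own growth computation: iterating $D$ on the edge letter $t_e$ produces the product $\delta(e)\, D_v(\delta(e)) \cdots D_v^{t-1}(\delta(e))$, and when $\delta(e)$ is a twisted coboundary of the above form this product \emph{telescopes} to $x\, g_0^{\,t}\, D_v^t(x)^{-1}$, which grows only linearly even though each individual factor $D_v^i(\delta(e))$ has length of order $i$ and the sum of the lengths is $\Theta(t^2)$. Such a $\delta(e)$ need not be ordinarily conjugate into any vertex group of $\cal G_v$, so your second case would wrongly declare quadratic growth for automorphisms that are in fact Dehn twist automorphisms; symmetrically, your first case misses the corresponding reattachments, since changing the attaching point of $e$ inside $\cal G_v$ modifies $\delta(e)$ precisely by a twisted conjugation, not an ordinary one.

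This is not a cancellation technicality that efficiency of the $D_v$ or minimality of $\cal G$ can repair: the telescoping collapse from $\Theta(t^2)$ to $\Theta(t)$ happens at the level of the group elements themselves, before any normal-form or cancellation analysis, and it occurs for perfectly efficient local Dehn twists. To fix the argument you must (i) replace ``conjugate into a vertex group'' by ``$D_v$-twisted conjugate to a length-$0$ element'' throughout, (ii) in the affirmative case use the twisted-conjugating element $x$ to re-base the edge attachment (this is the content of Subalgorithm III), and (iii) in the negative case prove a lower bound on the length of the \emph{product} $\prod_i D_v^i(\delta(e))$, not merely sum the lengths of its factors---this is exactly the nontrivial content of Corollary 7.2 of \cite{KY02}. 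A smaller point: the corollary concerns $\hat\phi$ itself, not a positive power, and making the $D_v$ efficient via Cohen--Lustig does not require passing to a power, so that reduction in your last paragraph should be removed.
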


The proof of this corollary is algorithmic, i.e. it can be effectively decided which alternative of the stated dichotomy holds. 
Indeed, a more specific statement is given by the following, 
where we want to stress that it crucially relies on the assumption that the graph $\cal G$ is minimal, see subsection \ref{sec-gog's}:


\begin{cor} [Corollary 7.2 of \cite{KY02}]
\label{rel-D-twists}
Let $\hat \phi \in Out(\FN)$ be represented by a partial Dehn twist $H: \cal G \to \cal G$ relative to a family of local Dehn twists.
Assume that 
%
for some edge $e$ of $\cal G$ the correction term 
$\delta(e)$ 
is not 
locally zero.

Then $\hat \phi$ has at least quadratic growth.
\qed

\end{cor}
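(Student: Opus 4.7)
The plan is to reduce Corollary \ref{rel-D-twists} to the dichotomy already recorded in Corollary \ref{q-growth+}, which says that a partial Dehn twist relative to a family of local Dehn twists either represents an honest Dehn twist automorphism of $\FN$, or else has growth of degree at least two. Under this reduction, the task becomes to show that when some correction term $\delta(e)$ fails to be locally zero, $\hat\phi$ cannot be realized as a Dehn twist on any graph-of-groups decomposition of $\FN$ compatible with $H$.

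First I would unpack the definition of \emph{locally zero} from \cite{KY02}: morally, $\delta(e)$ is locally zero when its image in the appropriate quotient of $G_{\tau(e)}$ — obtained by killing the subgroup generated by all $f_{e'}(G_{e'})$ for edges $e'$ incident at $\tau(e)$, together with the elements that are absorbed by the local Dehn twist $D_{\tau(e)}$ — vanishes. When this projection is nontrivial, the correction term genuinely mixes the local Dehn twist data at $\tau(e)$ with the edge structure of $\cal G$, in a way that cannot be eliminated by any modification of $\cal G$ preserving $\pi_1 \cal G$.

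Next, exploiting minimality of $\cal G$ (explicitly stressed before the statement), I would show that every rewriting of $H$ as an honest Dehn twist on some $\cal G'$ with $\pi_1\cal G' \cong \FN$ would force each correction term into the locally zero subgroup, contradicting the hypothesis. Minimality is essential here: it blocks the harmless modifications (subdivisions, foldings, or transfers of correction data from an edge to an adjacent vertex group) that could otherwise absorb a non-locally-zero $\delta(e)$. The combination of ``$\delta(e)$ not locally zero'' and minimality of $\cal G$ thus prevents $\hat\phi$ from being a Dehn twist, and Corollary \ref{q-growth+} delivers at least quadratic growth.

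The main obstacle I anticipate is making the ``absorption'' step rigorous, that is, verifying that every honest Dehn twist representation of $\hat\phi$ has all correction terms locally zero relative to the local Dehn twist structure at each vertex. A more constructive alternative would be to exhibit an explicit conjugacy class whose cyclically reduced length grows quadratically: pick $[w_0] \subset G_{\tau(e)}$ whose length under iterates of $D_{\tau(e)}$ grows linearly, form a loop $w$ in $\pi_1\cal G$ that crosses $e$ and dwells at $\tau(e)$ along $w_0$, and observe that under $H^k$ one picks up simultaneously a twist by $D_{\tau(e)}^{\,k}$ on the $w_0$-segment and a conjugation by a word built out of $\delta(e)$ and its $D$-translates of length growing in $k$. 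The hypothesis that $\delta(e)$ is not locally zero is precisely what rules out the cancellations between these two linear effects, yielding the desired quadratic lower bound.
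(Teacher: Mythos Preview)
The paper does not prove this statement at all: it is quoted verbatim as Corollary~7.2 of \cite{KY02} and closed with a \texttt{\textbackslash qed}. So there is no proof in this paper to compare against; the result is imported as a black box.

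That said, your proposed reduction has a structural problem. You want to deduce Corollary~\ref{rel-D-twists} from Corollary~\ref{q-growth+}, but in \cite{KY02} the dependence almost certainly runs the other way: the text here explicitly introduces Corollary~\ref{rel-D-twists} as ``a more specific statement'' underlying the algorithmic content of Corollary~\ref{q-growth+}, and Step~3 of the final algorithm uses the two halves separately (Subalgorithm~II detects a non--locally-zero correction term and then invokes Corollary~\ref{rel-D-twists}; Subalgorithm~III handles the locally-zero case via \cite{KY01}). So using Corollary~\ref{q-growth+} as input to prove Corollary~\ref{rel-D-twists} is, at the level of \cite{KY02}, circular.

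Even setting circularity aside, the heart of your argument is the implication ``$\delta(e)$ not locally zero $\Rightarrow$ $\hat\phi$ is not a Dehn twist automorphism on \emph{any} graph-of-groups'', and you do not actually prove it: you guess at the meaning of ``locally zero'', assert that minimality blocks all absorbing moves, and then concede that making this rigorous is ``the main obstacle''. That is precisely the content of the result, not a step one can wave past. Your alternative sketch --- building a conjugacy class that crosses $e$ and exhibiting two independent linear contributions that do not cancel --- is much closer in spirit to how such growth estimates are obtained in \cite{KY02}, and is the direction to pursue if you want an independent proof; but as written it is only a heuristic, and the non-cancellation claim is exactly where the precise definition of ``locally zero'' and the minimality hypothesis do real work.
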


\section{Quotes}

In this section we will assemble material from other sources that will be used crucially later.

\subsection{Nielsen-Thurston classification of surface homeomorphisms}
\label{NT-classification}
${}^{}$

The Nielsen-Thurston's classification theorem partitions homotopy classes of homeomorphisms $h$ of a compact 
surface $S$ into three (not mutually exclusive) classes:
(i) {\em periodic}, (ii) {\em reducible} and (iii) {\em pseudo-Anosov}.
It is shown that in each case $h$ can be improved further by an isotopy so that it becomes geometrically very special, with very strong dynamical properties. However, for the purpose of this paper it suffices to note the following consequence of this theory:

\begin{thm}
\label{Thurston+}
Denote $S$ a compact orientable surface.
For any homeomorphism $h: S \rightarrow S$ there is an integer 
$m \geq 1$ 
and a (possibly empty) collection $\cal C$ of pairwise disjoint essential simple closed curves,  
such that $h^{m}: S \rightarrow S$ is isotopic to a homeomorphism $h'$ which preserves a decomposition of $S$ into subsurfaces $S_j$ along $\cal C$, where the restriction $h_j$ of $h'$ to each $S_j$ falls into one of the following three classes:
\begin{enumerate}
\item
$h_j$ is the identity;
\item
$h_j$ is a Dehn twist on an annulus;
\item
$h_j$ is a pseudo-Anosov homeomorphism.
\end{enumerate}
\end{thm}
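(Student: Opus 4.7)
The plan is to deduce this statement from the classical Nielsen--Thurston theorem mentioned in the preceding paragraph, combined with two successive power-passings: one to preserve the reducing curves and complementary pieces setwise, and a second to kill the periodic pieces.

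First, I would apply the Nielsen--Thurston classification directly to $h$. This gives a (possibly empty) canonical reducing family $\mathcal{C}_0$ of pairwise disjoint essential simple closed curves, together with an integer $m_0 \geq 1$, such that $h^{m_0}$ is isotopic to a homeomorphism $\phi$ that setwise fixes every curve of $\mathcal{C}_0$ and every component of $S \smsm \mathcal{C}_0$. Moreover the classical theorem guarantees that the restriction of $\phi$ to each complementary component is, up to isotopy rel boundary, either periodic or pseudo-Anosov.

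Second, I would thicken each $c \in \mathcal{C}_0$ to a small closed annular neighbourhood $A_c$, chosen pairwise disjoint, and take $\mathcal{C} := \bigcup_{c} \partial A_c$. Cutting $S$ along $\mathcal{C}$ produces the annular pieces $A_c$ together with subsurfaces $\Sigma_j$ obtained by removing open collar neighbourhoods from the original Nielsen--Thurston pieces. After a further isotopy, $\phi$ preserves this finer decomposition; its restriction to each $\Sigma_j$ is still periodic or pseudo-Anosov, while its restriction to each annulus $A_c$ is a homeomorphism of a closed annulus fixing both boundary components, hence isotopic to some integer power of the standard Dehn twist about $c$.

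Third, I would pass to a further power to simplify the periodic pieces. Let $m_1$ be the least common multiple of the orders of the restrictions of $\phi$ to those $\Sigma_j$ on which $\phi$ is periodic. Then $h^m$ with $m := m_0 m_1$ is isotopic to a homeomorphism $h'$ whose restriction to each periodic $\Sigma_j$ is the identity, whose restriction to each pseudo-Anosov $\Sigma_j$ is a pseudo-Anosov (powers of pseudo-Anosov maps are again pseudo-Anosov), and whose restriction to each annulus $A_c$ is a Dehn twist, possibly trivial — in which case it falls into case (1) instead of (2). This $h'$ and the decomposition along $\mathcal{C}$ satisfy the conclusion.

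The only non-formal point is the identification of the action on each annular piece $A_c$ as a Dehn twist, but this is an elementary fact: any self-homeomorphism of a closed annulus fixing each boundary component setwise is isotopic rel boundary to a power of the Dehn twist. Everything else is bookkeeping around the Nielsen--Thurston theorem quoted at the beginning of this subsection.
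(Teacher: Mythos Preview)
Your derivation is correct and is precisely the standard way one extracts this statement from the classical Nielsen--Thurston trichotomy: pass to a power so that the reducing system and complementary pieces are individually preserved, thicken the reducing curves to annuli so that the twisting along them becomes visible as Dehn twists on honest subsurfaces, and then take a further power (the lcm of the periods) to turn the periodic pieces into identity pieces.

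Note, however, that the paper does not actually give its own proof of this theorem. It is placed in the section entitled ``Quotes'', prefaced by the sentence ``for the purpose of this paper it suffices to note the following consequence of this theory'', and is simply recorded as a known result from Nielsen--Thurston theory. So there is no paper proof to compare against; your argument supplies exactly the kind of derivation the paper is tacitly invoking. One small point worth being explicit about if you want a fully self-contained argument: the claim that the restriction to each annulus $A_c$ is (isotopic to) a power of a Dehn twist uses that the map is orientation-preserving on $A_c$; if $h$ itself reverses orientation you should first square it (absorbing this into $m_0$).
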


\begin{rem}
\label{non-orientable}
The above classification result extends in its main parts to a non-orientable surfaces $S$ as well, as can be seen directly from lifting the given homeomorphism to the canonical 2-sheeted orientable covering $\hat S$ of $S$. 
One then applies the above theorem to get $h': \hat S \to \hat S$ and uses the essential uniqueness of $h'$ to argue that $h'$ commutes with the covering involution, so that one can ``quotient'' $h'$  it back to a homeomorphism of $S$.

Here one needs to be a bit careful when it comes to Dehn twists on a curve $\hat c$ in $\hat S$ which is a double cover of a curve $c$ in $S$ with a Moebius band neighborhood $\cal N(c)$. In this case it turns out that a 
twist on $\hat c$ would be isotopic to 
two half-twists on each of the two boundary curves $c_1$ and $c_2$ of $\cal N(\hat c)$ (which is an annulus). However, twisting $n$ times on both, $c_1$ and $c_2$, descends in $\hat S$ to an $n$-fold twist on the boundary curve $c'$ of the Moebius band $\cal N(c)$ in $S$, and it is easy to see that any such twist (including possible a half-twist) is isotopic to the identity in $\cal N(c)$.

As a consequence one can impose for any non-orientable surface the additional condition that the collection $\cal C$ in Theorem \ref{Thurston+} consists only of curves which have an annulus neighborhood.

\end{rem}

If in the situation of Theorem \ref{Thurston+} (or Remark \ref{non-orientable}) none of the $h_j$ falls into class (3), then $h'$ is a {\em multiple Dehn twist} on the collection $\cal C$ of simple closed curves.
It is well-known that Dehn twist homeomorphisms induce linear growth on the conjugacy classes in $\pi_1 S$, while pseudo-Anosov homeomorpisms produce exponential growth. Thus we obtain as direct consequence:

\begin{cor}\label{corquadratic}
(1)
A polynomially growing outer automorphism $\hat \phi$ of $\FN$ which has quadratic or higher growth is not geometric.

\smallskip
\noindent
(2)
Any linearly growing $\hat \phi$ which is geometric 
has a 
positive power that 
is induced by a multiple Dehn twist homeomorphism.
\end{cor}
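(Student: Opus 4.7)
The plan is to derive both statements by contrapositive/direct application of Theorem~\ref{Thurston+} (together with Remark~\ref{non-orientable} in the non-orientable case), combined with the well-known translation between surface-dynamical types and growth rates of induced outer automorphisms.

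First I would assume $\hat\phi$ is polynomially growing and geometric, so that it is induced (via some marking $\theta:\pi_1 S\to\FN$) by a homeomorphism $h:S\to S$. Applying Theorem~\ref{Thurston+} (respectively Remark~\ref{non-orientable}), I obtain $m\geq 1$, a collection $\cal C$ of pairwise disjoint essential simple closed curves on $S$, and a representative $h'$ of $h^m$ up to isotopy that restricts to each piece $S_j$ of $S\smsm\cal C$ as either the identity, a Dehn twist on an annulus, or a pseudo-Anosov. Since isotopic homeomorphisms induce the same outer automorphism, $(h')_* = h^m_* = \hat\phi^m$.

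Next I would exclude the pseudo-Anosov case: if some restriction $h_j$ were pseudo-Anosov, then already the conjugacy classes in $\pi_1 S_j\hookrightarrow\pi_1 S$ represented by essential loops carried by $S_j$ would grow exponentially under iteration of $h'_*$, hence of $\hat\phi^m$, hence of $\hat\phi$. This contradicts the assumption that $\hat\phi$ is polynomially growing. Therefore every piece $h_j$ is either the identity or a Dehn twist on an annulus, i.e.\ $h'$ is a multiple Dehn twist on $\cal C$. A multiple Dehn twist is well known to induce at most linear growth of conjugacy classes in $\pi_1 S$, so $\hat\phi^m$ grows at most linearly, and hence so does $\hat\phi$ itself. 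This proves (1) by contraposition, and (2) follows at once: in the linearly growing geometric case the analysis above produces precisely the positive power $\hat\phi^m$ induced by the multiple Dehn twist homeomorphism $h'$.

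The only delicate point, and the one I would be most careful about, is the implication ``pseudo-Anosov piece $\Rightarrow$ exponential growth of $\hat\phi$''. This rests on the fact that the inclusion $\pi_1 S_j\hookrightarrow\pi_1 S$ is injective on conjugacy classes of essential loops in $S_j$, and that the pseudo-Anosov on $S_j$ stretches lengths of such loops exponentially; after transporting through the marking $\theta$, one gets exponential growth of the corresponding conjugacy classes in $\FN$, which is enough to contradict polynomial growth. Everything else is bookkeeping: Theorem~\ref{Thurston+} and Remark~\ref{non-orientable} cover both the orientable and non-orientable cases uniformly, and the standard linear-growth property of multiple Dehn twists on surfaces gives the quantitative bound needed for (1) and (2).
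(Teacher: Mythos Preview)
Your proposal is correct and follows essentially the same argument as the paper: apply the Nielsen--Thurston decomposition (Theorem~\ref{Thurston+} and Remark~\ref{non-orientable}), rule out pseudo-Anosov pieces by exponential growth, and conclude that what remains is a multiple Dehn twist with linear growth. The paper states this as an immediate consequence in a single sentence, whereas you spell out the contrapositive and the conjugacy-class growth comparison more explicitly, but the content is the same.
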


\subsection{Boundary curves of surfaces}
${}^{}$

Rather than going directly after the question which automorphism of $\FN$ is geometric, one can first consider the following much easier question: 

\smallskip

{\em When is an outer automorphism $\hat \phi$ of $\FN$ with respect to a {\em fixed} identification $\FN = \pi_1 S$ induced by some homeomorphism of $S$ ?}

\smallskip

This classical question has been answered long time ago by combined work of several well known mathematicians (Dehn, Nielsen, Baer, Fenchel, ...):

\begin{thm}[\cite{Zieschang} Theorem~5.7.1 or Theorem~5.7.2]
Let $S$ be a (possibly non-orientable) surface with boundary curves $c_1, \ldots, c_k$,
and assume $k \geq 1$. 

An outer automorphism $\hat \phi$ of $\pi_{1}(S)$ is induced by a homeomorphism of $S$ if and only if there is a permutation $\sigma$ of $\{1, \ldots, k\}$ and exponents $\epsilon_{i} \in \{1, -1\}$ such that 
$\hat \phi$ maps the conjugacy class determined by $c_i$ to the one determined by $c_{\sigma(i)}^{\epsilon_i}$.
\end{thm}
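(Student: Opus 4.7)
The necessity (``only if'') direction is immediate: any self-homeomorphism $h: S \to S$ restricts to a homeomorphism of the boundary $\partial S$, which permutes the finitely many boundary circles and on each either preserves or reverses orientation, yielding the required permutation $\sigma$ and exponents $\epsilon_i$.

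For sufficiency, the plan is to follow the classical Dehn-Nielsen-Baer strategy by reducing the bordered surface $S$ to a closed surface $\hat S$. First, cap off each boundary circle $c_i$ by attaching a disk $D_i$ to obtain $\hat S$. The hypothesis that $\hat\phi$ sends $[c_i]$ to $[c_{\sigma(i)}^{\epsilon_i}]$ implies that $\hat\phi$ preserves the normal closure $N = \langle\!\langle c_1, \ldots, c_k \rangle\!\rangle$ in $\pi_1(S)$, and therefore descends to a well-defined outer automorphism $\bar\phi$ of $\pi_1(\hat S) = \pi_1(S)/N$. Then invoke the Dehn-Nielsen-Baer theorem for closed surfaces (classical in the orientable case, due to Mangler and Epstein in the non-orientable case) to realize $\bar\phi$ by a homeomorphism $\hat h: \hat S \to \hat S$. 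Isotope $\hat h$ so that it maps each disk $D_i$ to $D_{\sigma(i)}$ by a model homeomorphism whose restriction to $\partial D_i = c_i$ is the standard rotation if $\epsilon_i = +1$ and a reflection if $\epsilon_i = -1$. Restricting $\hat h$ to $S$ then produces a candidate self-homeomorphism $h: S \to S$ whose action on $\partial S$ realizes the prescribed data.

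The remaining and most delicate step is to check that $h_* = \hat\phi$ as outer automorphisms of $\pi_1(S)$, rather than merely modulo $N$. Both $h_*$ and $\hat\phi$ descend to $\bar\phi$ and agree on the boundary conjugacy classes, so their difference lies in the kernel of the restriction map $\Aut(\pi_1(S)) \to \Aut(\pi_1(\hat S))$ while fixing each $[c_i]$. This kernel is generated, inside the mapping class group of $S$, by Dehn twists along curves parallel to the boundary components, and each such boundary Dehn twist is itself induced by a homeomorphism of $S$. Composing $h$ with an appropriate product of such boundary Dehn twists therefore yields a homeomorphism of $S$ inducing $\hat\phi$. The non-orientable case is handled by passing to the orientation double cover, running the argument equivariantly, and descending, with the care regarding twists along one-sided curves noted in Remark~\ref{non-orientable}. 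The main obstacle is thus concentrated in Step~2 (the invocation of the closed-surface Dehn-Nielsen-Baer theorem) and in the identification of the kernel in Step~5.
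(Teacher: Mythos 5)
The paper offers no proof of this statement at all --- it is quoted verbatim from Zieschang (Theorem~5.7.1/5.7.2) as a classical black box --- so there is no internal argument to compare yours against. Judged on its own merits, your proposal has a genuine gap, and it sits exactly where you yourself flag the argument as ``most delicate.''

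The ``only if'' direction is fine. For the ``if'' direction, the capping-off strategy founders at Step~5. After producing $h$ from the closed-surface Dehn--Nielsen--Baer theorem, the discrepancy $h_*^{-1}\circ\hat\phi$ is merely an outer automorphism of the free group $\pi_1(S)$ that fixes the boundary conjugacy classes and becomes inner after killing $N$. You then assert that this kernel ``is generated, inside the mapping class group of $S$, by Dehn twists along curves parallel to the boundary.'' But the discrepancy is not a priori an element of the mapping class group --- it is only an element of $\Aut(\pi_1 S)$ --- and the claim that every automorphism of the free group $\pi_1 S$ which fixes each $[c_i]$ and acts trivially modulo $N$ is a product of boundary twists is precisely an instance of the theorem being proved. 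No argument is offered for it, so the proof is circular at its crucial step. The collapse is total in low-complexity cases: for the pair of pants, $\hat S = S^2$ and $\pi_1 \hat S = 1$, so the closed-surface theorem carries no information and your entire argument reduces to the unproved kernel claim. This is why the classical proofs (Zieschang's combinatorial hierarchy argument, or Nielsen-type arguments on the universal cover) work directly with the bordered surface and its one-relator-with-boundary presentation rather than capping off: the free group $\pi_1 S$ retains exactly the information that the quotient surface group destroys. To salvage your outline you would need an independent proof of the kernel identification, which is not easier than the theorem itself.
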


It remains now to study the possible collections of boundary curves in the fundamental group of a surface. For this purpose one has the following well known result:

\begin{thm}\cite{Zieschang}
\label{boundary-collection}
Let $S$ be a compact surface with boundary. Then there exist a basis $\cal B$ for the free group $\pi_1 S$ such that the boundary curves of $S$ 
(up to reversion of their orientation) 
determine homotopy classes in $\pi_1 S$ which are given by the following collection $\cal C$ of elements:
\begin{enumerate}
\item
If $S$ is orientable, then $\cal B = \{s_{1},\ldots, s_{k},u_{1},t_{1},\ldots, u_{g},t_{g}\}$ with $k \geq 1$ and $g \geq 0$, and $\cal C = \{s_{1},\ldots, s_{k}, s_{1}\dots s_{k}\prod\limits_{i=1}^{g}[u_{i},t_{i}]\}$ (where $[x, y] := x y x^{-1} y^{-1}$).
\item
If $S$ is non-orientable, then $\cal B = \{s_{1},\ldots, s_{k},v_{1}, \ldots, v_{\ell}\}$ with $k \geq 1$ and $\ell \geq 0$, and $\cal C = \{s_{1},\ldots, s_{k}, s_{1}\dots s_{k}v_1^2 \ldots v_\ell^2\}$.
\end{enumerate}
\end{thm}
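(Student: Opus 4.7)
The plan is to reduce to the standard CW-structure that arises from the classification of compact surfaces. Any compact surface $S$ with non-empty boundary deformation retracts to a finite $1$-complex, so $\pi_1 S$ is free, of rank $1-\chi(S)$. By the classification theorem, $S$ is homeomorphic to a closed surface $\bar S$---orientable of genus $g$, or non-orientable of genus $\ell$---with $k+1$ open disks removed, and a short Euler characteristic count gives rank $2g+k$ respectively $\ell+k$, matching $|\cal B|$ in the two cases.

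Next I would realize $\bar S$ as a standard polygon whose boundary word is $\prod_{i=1}^{g}[u_{i},t_{i}]$ or $\prod_{j=1}^{\ell}v_{j}^{2}$, and realize $S$ by removing $k+1$ disjoint open disks from the interior of this polygon. Label the resulting boundary circles $\partial_{0}, \partial_{1}, \dots, \partial_{k}$ and join each of $\partial_{1}, \dots, \partial_{k}$ to the common image of the polygon corners by pairwise disjoint arcs $\alpha_{i}$, setting $s_{i} = \alpha_{i} \cdot \partial_{i} \cdot \alpha_{i}^{-1}$. After collapsing onto a spine using these arcs together with the side identifications, $S$ deformation retracts onto a wedge of $|\cal B|$ circles labelled by the elements of $\cal B$; in particular $\cal B$ is a free basis of $\pi_{1}S$.

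It remains to compute the homotopy class of the last boundary circle $\partial_{0}$ in this basis. Pushing $\partial_{0}$ outward to the boundary of the polygon and then dragging it along a path that makes a detour via each arc $\alpha_{i}$ around $\partial_{i}$ and finally traverses the full boundary word of the polygon, one reads off the word $s_{1}\cdots s_{k}\prod_{i=1}^{g}[u_{i},t_{i}]$ in the orientable case, and $s_{1}\cdots s_{k}v_{1}^{2}\cdots v_{\ell}^{2}$ in the non-orientable case. Together with the $s_{i}$ themselves, this produces exactly the collection $\cal C$.

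The main obstacle I expect is the careful bookkeeping of orientations and of the cyclic order in which the arcs $\alpha_{i}$ emanate from the basepoint, since this order determines the order of the factors $s_{1}, \dots, s_{k}$ appearing in the word for $\partial_{0}$. One may need to reindex the $s_{i}$ and, in some cases, replace $s_{i}$ by $s_{i}^{-1}$; because the theorem only records boundary curves \emph{up to reversion of orientation} and $\cal C$ is a set rather than an ordered tuple, these adjustments are harmless. In the non-orientable case the failure of a consistent global orientation manifests itself precisely in replacing the commutator factors by the squares $v_{j}^{2}$, which is already encoded in the shape of the polygon word.
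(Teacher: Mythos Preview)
The paper does not actually supply a proof of this theorem: it is quoted as a classical result with a reference to \cite{Zieschang}, and no argument is given. Your sketch is the standard way one establishes this fact---realise $S$ via the classification theorem as a polygon with the usual boundary word and $k+1$ disks removed, retract onto a spine to read off the free basis $\cal B$, and then observe that the remaining boundary circle is freely homotopic to the full boundary of the holed polygon, which spells out $s_1\cdots s_k$ followed by the relator word of the closed model. Your Euler-characteristic count and your remarks about reindexing and orientation reversal are accurate and dispose of the only genuine subtleties. In short, your proposal is correct and is precisely the argument one would find in the cited source; there is nothing in the paper to compare it against.
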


\begin{defn}
\label{bdy-family}
Any family $A = \{a_1, \dots, a_k\}$ of elements $a_i \in \FN$ will be called a {\em boundary family} if there is an automorphism of $\FN$ which maps $A$ to a family of elements that are conjugate to the elements of a 
subset of the 
collection $\cal C$ as given in case (1) or (2) of the above theorem.
\end{defn}

\subsection{Whitehead's Algorithm}
${}^{}$



J.H.C. Whitehead invented in the middle of the last century an algorithm which is one of the strongest, of the most interesting, and also one of most studied among all known algorithms. Although originally deviced for curves on a handlebody, it was quickly understood that its true character is combinatorial; many improved versions of the algorithm have been published since, but they all rely on 
Whitehead's
fundamental insights. A relatively moderate version of it is used in this paper:

\begin{thm}[\cite{Lyndon}]
Given two families of 
elements
$A=\{a_{1},a_{2},...a_{s}\}$ and $B=\{b_{1},b_{2},...,b_{s}\}$ in $F_{n}$, 
it can be decided algorithmically 
whether or not there exists an outer automorphism $\hat\phi \in \Out(F_{n})$ such that 
 $\hat \phi$ maps each conjugacy class $[a_i]$ to $[b_{\sigma(i)}]$, for some permutation $\sigma$.
\end{thm}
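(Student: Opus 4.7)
The plan is to follow the classical Whitehead peak-reduction strategy, which works uniformly for finite tuples of elements and is insensitive (after the bookkeeping of a permutation $\sigma$) to whether one compares individual elements or unordered multisets of conjugacy classes.

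First I would replace each $a_i$ and each $b_j$ by a cyclically reduced representative of its conjugacy class; this is harmless since an outer automorphism permutes conjugacy classes, and it ensures the length $|\,\cdot\,|$ we consider is the cyclic word length. Next, recall the finite set of \emph{Whitehead automorphisms} of $F_n$ with respect to a fixed basis $X=\{x_1,\ldots,x_n\}$: these are either the permutations of $X\cup X^{-1}$ preserving inversion, or the maps determined by a choice of multiplier $a\in X\cup X^{-1}$ together with a subset $W\subset (X\cup X^{-1})\setminus\{a,a^{-1}\}$. There are only finitely many such automorphisms, and together they generate $\Aut(F_n)$.

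The main tool is Whitehead's \emph{peak reduction lemma} (see \cite{Lyndon}): if $A'=\hat\psi(A)$ for some $\hat\psi\in\Aut(F_n)$ and $A'$ has total cyclic length strictly smaller than that of $A$, then some single Whitehead automorphism already strictly decreases the total cyclic length of $A$. Assuming this, the algorithm has two phases. \textbf{Phase 1 (descent to minimal length).} Starting from $A$, as long as some Whitehead automorphism strictly decreases the total cyclic length $\sum_i |a_i|$, apply it; since the length is a non-negative integer, this halts at a tuple $A^*$ of minimal length in its $\Aut(F_n)$-orbit. Do the same for $B$, producing $B^*$. By the peak reduction lemma, if $A$ and $B$ lie in the same orbit, then so do $A^*$ and $B^*$, and both have the same minimal length $L$.

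\textbf{Phase 2 (search at minimal length).} At length $L$ there are only finitely many tuples of cyclically reduced words of total length $L$, up to cyclic permutation within each entry, so the set $\Omega$ of all such $s$-tuples is finite and explicitly computable. Build the finite directed graph on $\Omega$ whose edges are the applications of Whitehead automorphisms that preserve the total length $L$ (at minimal length, the peak reduction lemma implies that the full $\Aut(F_n)$-orbit intersected with $\Omega$ is connected by such edges). Perform breadth-first search from $A^*$ and compute the component $\mathcal{O}\subset\Omega$ reachable from it. Finally, decide whether there is some permutation $\sigma\in S_s$ and cyclic rotations of the entries of $B^*$ such that the resulting tuple lies in $\mathcal{O}$; this is a finite check. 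Output \emph{yes} in that case and \emph{no} otherwise. Correctness follows from the peak reduction lemma together with the fact that we explicitly match tuples up to permutation $\sigma$ and up to cyclic conjugation in each coordinate.

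The only nontrivial ingredient is the peak reduction lemma itself, which is the historical core of Whitehead's method; its proof is combinatorial and is given in \cite{Lyndon}. Once it is in hand, both the descent phase and the length-preserving search terminate in finite time since both the Whitehead automorphism set and the set $\Omega$ are finite and effectively enumerable, so the whole procedure is an algorithm in the claimed sense.
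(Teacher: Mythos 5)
Your proposal is correct: it is precisely the classical peak-reduction argument (descent to minimal total cyclic length via Whitehead automorphisms, then a finite length-preserving search at the minimal level, with the permutation $\sigma$ and cyclic rotations handled by a finite check), which is exactly the proof in the cited source \cite{Lyndon}. The paper itself gives no proof of this statement — it is quoted as a known theorem — so there is nothing to compare beyond noting that your argument is the standard one being invoked.
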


Combining the above result with Theorem \ref{boundary-collection} we obtain immediately:

\begin{cor}
\label{bdy-algorithm}
There exists an algorithm which decides whether any given finite family of elements of $\FN$ is a boundary family.
\end{cor}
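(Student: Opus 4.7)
The plan is to combine Definition \ref{bdy-family} and Theorem \ref{boundary-collection} with the Whitehead algorithm via a finite enumeration. Given the input family $A=\{a_1,\ldots,a_s\} \subset \FN$, the task is to decide whether some $\hat\phi \in \Aut(\FN)$ carries the conjugacy classes of $A$ into the conjugacy classes of a subset of the collection $\cal C$ associated to some surface $S$ with $\pi_1 S \cong \FN$.

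First I would bound the finitely many relevant surface types. Since $\pi_1 S \cong \FN$ is free of rank $n$ and the basis $\cal B$ of Theorem \ref{boundary-collection} has exactly $|\cal B| = n$ elements, the orientable case imposes $n = k + 2g$ with $k \geq 1$, $g \geq 0$, while the non-orientable case imposes $n = k + \ell$ with $k \geq 1$, $\ell \geq 0$. Both equations admit only finitely many integer solutions, and for each of them the collection $\cal C$ is an explicit finite set of $k+1$ words in the distinguished basis. Moreover we may restrict attention to those $S$ for which $s \leq k+1$, since otherwise $A$ cannot match a subset of $\cal C$ already on cardinality grounds.

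Next, for each such surface type, I would enumerate all subsets $B \subseteq \cal C$ with $|B| = s$, and for every such $B$ invoke the Whitehead-style algorithm (the theorem quoted from \cite{Lyndon}) to decide whether there exists $\hat\phi \in \Out(\FN)$ sending $\{[a_i]\}$ onto $\{[b_{\sigma(i)}]\}$ for some permutation $\sigma$. To accommodate the fact that a boundary curve has no preferred orientation, I would moreover run the test over each of the finitely many sign vectors $(\epsilon_1,\ldots,\epsilon_s) \in \{\pm 1\}^s$, replacing $[b_{\sigma(i)}]$ by $[b_{\sigma(i)}^{\epsilon_i}]$. If any of these finitely many decisions returns a positive answer, $A$ is declared a boundary family; if all of them return a negative answer, it is not.

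The correctness is immediate from Definition \ref{bdy-family} together with Theorem \ref{boundary-collection}: by definition, membership in the class of boundary families is precisely the disjunction of the finite list of conditions just enumerated. There is no serious obstacle: the only point requiring a little care is the bookkeeping that ensures the enumeration over $(n,k,g)$ or $(n,k,\ell)$, over subsets $B \subseteq \cal C$, and over permutations and sign choices, is indeed finite — which follows from the constraints $k+2g=n$ or $k+\ell=n$. Since each individual decision step is furnished by the already-quoted Whitehead algorithm, the whole procedure terminates and decides the boundary-family property algorithmically.
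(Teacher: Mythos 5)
Your proof is correct and takes essentially the same route as the paper, which simply asserts that the corollary follows ``immediately'' from combining the quoted Whitehead theorem with Theorem \ref{boundary-collection}; you have merely made the finite enumeration explicit (over the finitely many surface types with $k+2g=n$ resp.\ $k+\ell=n$, over size-$s$ subsets of the explicit collection $\cal C$, with the permutation already absorbed by the quoted form of the Whitehead algorithm). The only cosmetic deviation is your additional enumeration over sign vectors $(\epsilon_1,\ldots,\epsilon_s)$, which the literal wording of Definition \ref{bdy-family} does not require but which is a harmless finite superset of tests consistent with the unoriented nature of boundary curves.
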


%




\section{Special topological representatives}
\label{sec-4}

The goal of this section is to derive, for any polynomially growing automorphism $\hat \phi \in \Out(\FN)$, from a relative train track representative of $\hat \phi$ as given by Bestvina-Feighn-Handel, a topological representative with some special properties which are summarized as follows:

\begin{defn}
\label{special-tt}
A 
self map $f: \Gamma \to \Gamma$ of a graph $\Gamma$ which preserves a filtration $V(\Gamma) =\Gamma_0 \subset \Gamma_1 \subset \ldots \subset \Gamma_m = \Gamma$ and induces via some marking isomorphism $\FN \cong \pi_1 \Gamma$ the automorphism $\hat \phi \in \Out(\FN)$ is called a {\em special topological representative of $\hat \phi$} if the following conditions hold:
\begin{enumerate}
\item
Every connected component of $\Gamma_1$ has non-trivial fundamental group and is pointwise fixed by $f$.
\item
Every stratum $H_i = cl(\Gamma_i \smallsetminus \Gamma_{i-1})$ with $i \geq 2$ consists of a single edge $e_i$, with $f(e_i) = w_i e_i u_i$, where $w_i$ and $u_i$ are closed paths in $\Gamma_{i-1}$.
\end{enumerate}
The map $f$ itself will be called a {\em special graph map}.
\end{defn}

Such a special topological representative can be derived algorithmically from 
an improved relative train track
representative of $\hat \phi$ given by Bestvina-Handel-Feighn \cite{BFH00}. This will be explained below in detail; we separate the various issues and treat them in disjoint subsections.

\subsection{Moving all fixed edges into the bottom stratum}
\label{3.1}

${}^{}$

We first recall from Theorem \ref{top-rep} that
%
in \cite{BFH00} it has been shown that for every  polynomially-growing outer automorphism $\hat{\phi} \in Out(F_{n})$ one can derive algorithmically
a topological representative $f: \Gamma \rightarrow \Gamma$ with a filtration $V(\Gamma) =\Gamma_{0} \subset \Gamma_{1} \subset \ldots \subset \Gamma_{m} = \Gamma$ representing a positive power 
of $\hat{\phi}$ with the following properties:
\begin{enumerate}
\item all vertices $v \in V(\Gamma)$ are fixed by the map $f$;
\item every stratum $H_{i} = cl(\Gamma_i \smallsetminus \Gamma_{i-1})$ with $i \geq 1$
consists of a single edge $e_i$ such that $f(e_{i})=e_{i}u_{i}$, where $u_{i} \subset \Gamma_{i-1}$ is a closed 
reduced 
path.
\end{enumerate}

Since we are only interested in the homotopy properties of the filtration and not in the combinatorics, we will now impose the following:

\begin{convention}
\label{u-non-trivial}
\rm
%
We replace the given filtration of $\Gamma$ by a new filtration (denoted homonymously) which has the additional property that all {\em identity edges} $e_i$ (i.e.  $f(e_i) = e_i$) are assembled in the subgraph $\Gamma_1$, referred from now on to as the {\em bottom subgraph}. This is clearly possible simply by relabeling the edge indices, so that we obtain:
\begin{enumerate}
\item[(2*)]
every stratum $H_{i}$ with $i \geq 2$ consists of a single edge $e_i$ such that $f(e_{i})=e_{i}u_{i}$, where $u_{i} \subset \Gamma_{i-1}$ is a closed non-contractible path;
\item[(3)]
the stratum $H_1$ consists entirely of identity edges. 
\end{enumerate}
A connected component of $\Gamma_1$ is called {\em essential} if it has non-trivial fundamental group. Otherwise the connected component is contractible and thus called {\em inessential}.
\end{convention}

\subsection{
The sliding Operation}
\label{3.2}
${}^{}$

In order to improve the filtered topological representative of $\hat \phi$ further we first prove in this subsection a general lemma.

\smallskip

Assume $X$ is a topological space, $P, Q, R \in X$ are points in $X$, and $\gamma \subset X$ is a path which joins $Q$ to $R$.
Then let $Y = X \cup \{e_{1}\}$, $Y^{\prime} = X \cup \{e_{2}\}$ denote two new topological spaces, where $e_{1}$ is an edge which  joins $P$ to $Q$, $e_{2}$ is an edge from $P$ to $R$, and both meet $X$ only in their endpoints.

Suppose $f$ and $f^{\prime}$ are two maps which satisfy the following conditions:

\begin{enumerate}
\item[$\bullet$] $f: Y \rightarrow Y$ is a map such that $f(X) \subset X$ and $f(e_{1})=\beta_{0} e_{1} \beta_{1}$, where $\beta_{0}$ and $\beta_{1}$ are two paths in $X$.

\item[$\bullet$] $f^{\prime}: Y^{\prime} \rightarrow Y^{\prime}$ is a map such that $f^{\prime}|_{X} = f $ and $f^{\prime}(e_{2})=\beta_{0} e_{2} \gamma^{-1} \beta_{1} f(\gamma)$.
\end{enumerate}

\begin{lem}
\label{sliding-lemma}
There is a homotopy equivalence $\kappa: Y \rightarrow Y^{\prime}$ such that the following diagram commutes (up to homotopy):

\[
\begin{CD}
Y @>f>> Y \\
@V\kappa VV @V\kappa VV \\
Y^{\prime} @>>f^{\prime}> Y^{\prime}
\end{CD}
\]

\end{lem}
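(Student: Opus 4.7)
The plan is to write down an explicit candidate for $\kappa$ and then verify both that it is a homotopy equivalence and that the square commutes up to homotopy. The crucial observation about endpoints is that $e_1$ goes from $P$ to $Q$ while the concatenation $e_2 \gamma^{-1}$ also goes from $P$ to $R$ to $Q$, so there is a natural map between $Y$ and $Y'$ that fixes $X$.

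First I would set
\[
\kappa|_X = \text{id}_X, \qquad \kappa(e_1) = e_2\,\gamma^{-1},
\]
and introduce the candidate homotopy inverse
\[
\kappa'|_X = \text{id}_X, \qquad \kappa'(e_2) = e_1\,\gamma.
\]
To see these are mutually inverse up to homotopy, I would compute $\kappa'\kappa(e_1) = e_1\,\gamma\gamma^{-1}$ and $\kappa\kappa'(e_2) = e_2\,\gamma^{-1}\gamma$, and observe that in both cases the $\gamma\gamma^{-1}$ (resp.\ $\gamma^{-1}\gamma$) segment is a backtracking path in $X$ and hence null-homotopic rel endpoints. This shows $\kappa$ is a homotopy equivalence.

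Next I would verify the homotopy commutativity of the square. On $X$ the equality $\kappa\circ f|_X = f'\circ\kappa|_X$ is immediate from $\kappa|_X = \text{id}_X$ and $f'|_X = f|_X$. On the edge $e_1$ I would compute
\[
\kappa\circ f(e_1) = \kappa(\beta_0\, e_1\, \beta_1) = \beta_0\, e_2\, \gamma^{-1}\, \beta_1,
\]
while
\[
f'\circ\kappa(e_1) = f'(e_2\,\gamma^{-1}) = f'(e_2)\cdot f(\gamma^{-1}) = \beta_0\, e_2\, \gamma^{-1}\, \beta_1\, f(\gamma)\cdot f(\gamma)^{-1}.
\]
The trailing $f(\gamma)f(\gamma)^{-1}$ is a backtracking path in $X$ and so null-homotopic rel endpoints; hence the two images of $e_1$ are homotopic rel $\{P,Q\}$, giving the required homotopy on $Y$ by combining with the already established equality on $X$.

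The main obstacle, such as it is, is bookkeeping: one must be careful that all path concatenations make sense (the endpoints line up: $\beta_0$ ends at $P$, $\beta_1$ starts at $Q$, and $\gamma$ goes from $Q$ to $R$), and that the homotopies one invokes are genuine homotopies rel the relevant endpoints rather than just free-homotopies. For a general topological space $X$ this is routine once one notes that backtracking concatenations $\gamma\gamma^{-1}$ in $X$ are null-homotopic rel endpoints; in the applications of this lemma $X$ will be a graph and these verifications reduce to elementary free reductions.
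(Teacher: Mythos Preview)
Your proof is correct and follows essentially the same approach as the paper's: the same explicit $\kappa$ and $\kappa'$, the same verification that they are mutual homotopy inverses via cancellation of $\gamma\gamma^{-1}$, and the same check on $e_1$ for the commuting square. Your added remarks on endpoint bookkeeping and rel-endpoint homotopies are a welcome bit of extra care but do not change the argument.
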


\begin{proof}
Define $\kappa$ as:
$\kappa |_{X} = id_X$, $\kappa(e_{1})=e_{2} \gamma^{-1}$.
Let now $\kappa^{\prime}: Y^{\prime} \rightarrow Y$ be a map defined through $\kappa^{\prime} |_{X} = id_X$ and $\kappa^{\prime}(e_{2})=e_{1} \gamma$.
Then we can easily verify that $\kappa^{\prime} \circ \kappa 
\simeq
id_{Y}$ and $\kappa \circ \kappa^{\prime} 
\simeq
id_{Y^{\prime}}$. Thus $Y$ and $Y^{\prime}$ are homotopy equivalent.

Now we verify the ``commutative diagram'' $\kappa \circ f 
\simeq
f^{\prime} \circ \kappa$.
The only nontrivial part is to check how they act on $e_{1}$.
%
We observe:
\begin{enumerate}
\item[$\diamond$]
$\kappa \circ f (e_{1}) 
\simeq
\kappa(\beta_{0} e_{1} \beta_{1}) 
\simeq
\beta_{0} e_{2} \gamma^{-1} \beta_{1}$,
\item[$\diamond$]
$f^{\prime} \circ \kappa (e_{1}) \cong f^{\prime} (e_{2} \gamma^{-1}) 
\simeq
\beta_{0} e_{2} \gamma^{-1} \beta_{1} f(\gamma)f(\gamma^{-1}) 
\simeq
\beta_{0} e_{2} \gamma^{-1} \beta_{1}$.
\end{enumerate}
Thus $\kappa \circ f (e_{1})
\simeq
f^{\prime} \circ \kappa (e_{1})$.
Therefore the diagram commutes.
\end{proof}

\begin{rem}
\label{converse-edge}
In our assumption the edges $e_1$ and $e_2$ are oriented from $P$ to $Q$ and from $P$ to $R$ respectively. However, in the next subsection we also need to work with edges directed conversely, in which case the above proof yields the formulas $\kappa(e_{1}^{-1})= \gamma e_{2}^{-1}$, $f(e_{1}^{-1})=\beta_{1}^{-1} e_{1}^{-1} \beta_{0}^{-1}$ and $f'(e_{2}^{-1})=f(\gamma)^{-1} \beta_{1}^{-1} \gamma e_2^{-1} \beta_{0}^{-1}$.

\end{rem}

\subsection{Getting rid of the inessential components}
\label{3.3}
${}^{}$

We go now back to our topological representative of $\hat \phi$ as obtained at the end of subsection \ref{3.1}, i.e. conditions (1), (2*) and (3) are satisfied. The goal of this subsection is to use the sliding lemma from the previous subsection to get rid of all inessential components in the bottom subgraph $\Gamma_1$ of $\Gamma$.

\smallskip

For this purpose we first note that from condition (2*) it follows for every stratum $H_i$ that the sole edge $e_i$ in $H_i$ is attached at its terminal vertex to an essential component of 
$\Gamma_{i-1}$. For the initial vertex, however, this is not clear. Let $H_i$ be the lowest stratum such that the initial vertex of $e_i$ is attached to an inessential component $\cal C$ of 
$\Gamma_{i-1}$.

If no further edge $e_k$ with 
$k \geq i+1$ is attached to $\cal C$, then we can simply contract $\cal C$ together with $e_i$ to the terminal vertex of $e_i$; the left-over deformation retract is $f$-invariant and hence also a topological representative of $\hat \phi$, with all properties as the original one, but less inessential components in the bottom subgraph.

If any edge $e_k$ with $k \geq i+1$ has its initial vertex attached to $\cal C$, 
then we define $\gamma$ to be the path in $\cal C\cup e_i$ from the initial vertex of $e_k$ to the terminal vertex of $e_i$. We then use Lemma \ref{sliding-lemma} to obtain a homotopy equivalent topological representative of $\hat \phi$ through
replacing $e_k$ by an edge $e'_k$ which differs from $e_k$ in that its initial vertex is now equal to the terminal vertex of $e_i$, which lies well in some essential component $\cal C'$. We see from Remark \ref{converse-edge} that $e'_k$ is mapped to a path $w_k {e'}_k u_k$, where $w_k = f(\gamma^{-1}) \gamma = u_i^{-1} \gamma^{-1} \gamma$ which reduces to the path $u_i^{-1}$ that is contained in $\cal C'$.
After this replacement we need to adjust all attaching maps of the strata $H_{k'}$ for $k' \geq k+1$, by composing the given attaching maps of the edge $e_{k'}$ in $H_{k'}$ with the homotopy equivalence $\kappa'$ from Lemma \ref{sliding-lemma}, followed by a homotopy to guarantee that the paths $w_{k'}$ and $u_{k'}$ in the obtained image $w_{k'} e_{k'} u_{k'}$ of the edge $e_{k'}$ do not enter the subgraph $\cal C \cup e_i$.

We now repeat this operation for any edge with initial vertex in $\cal C$, until there is none left over, which allows us to proceed as above in the next to last paragraph. 

After repeating this operation finitely many times no edge will have its initial edge attached to an inessential component of the bottom subgraph. Since the homotopy type of the total graph hasn't changed, and $\Gamma$ was assumed to be connected, we have proved that in the resulting graph the bottom subgraph has no inessential connected component. Thus we have shown:

\begin{prop}
\label{special-prop}
Every polynomially growing automorphism $\hat \phi \in \Out(\FN)$ has a positive power that can be represented by a special topological representative.

\qed
\end{prop}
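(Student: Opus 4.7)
The plan is to assemble the three preparatory subsections \ref{3.1}--\ref{3.3} into a single proof. I would first invoke Theorem \ref{top-rep} (Bestvina-Feighn-Handel) to pass from $\hat\phi$ to a suitable positive power $\hat\phi^t$ represented by a filtered topological representative $f: \Gamma \to \Gamma$ with $V(\Gamma) = \Gamma_0 \subset \Gamma_1 \subset \cdots \subset \Gamma_m = \Gamma$, where every vertex is fixed and every stratum above the vertices consists of a single edge $e_i$ mapped to $e_i u_i$ with $u_i$ a reduced closed path in $\Gamma_{i-1}$.

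The second step is purely combinatorial: following Convention \ref{u-non-trivial}, I would relabel the edges so that all identity edges (those with $u_i$ trivial) are collected into the bottom subgraph $\Gamma_1$, yielding conditions (2*) and (3) of subsection \ref{3.1}. Since relabeling identity edges cannot spoil any of the conditions of Theorem \ref{top-rep}, what remains is precisely condition (1) of Definition \ref{special-tt}: that each connected component of $\Gamma_1$ has non-trivial fundamental group.

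The substantive step is the iterative elimination of the inessential components of $\Gamma_1$ performed in subsection \ref{3.3}. Observe first that, since $u_i$ is non-contractible for $i \geq 2$, the terminal vertex of every such $e_i$ automatically lies in an essential component of $\Gamma_{i-1}$; only the initial vertex can possibly be attached to an inessential component $\mathcal C$. Choose the lowest stratum $H_i$ for which this happens. If no higher-indexed edge attaches to $\mathcal C$, I would simply collapse $\mathcal C \cup e_i$ onto the terminal vertex of $e_i$; the quotient is a $f$-invariant deformation retract still representing $\hat\phi^t$. Otherwise, for each higher edge $e_k$ attached to $\mathcal C$, I would apply Lemma \ref{sliding-lemma} with $\gamma$ the path in $\mathcal C \cup e_i$ from the initial vertex of $e_k$ to the terminal vertex of $e_i$. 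The slid edge $e_k'$ then has its initial vertex in an essential component, and by Remark \ref{converse-edge} its image reduces to the form $w_k' e_k' u_k'$ with both $w_k', u_k' \subset \Gamma_{k-1}$. The attaching maps of all strata above $H_k$ must then be post-composed with the homotopy inverse $\kappa'$ and slightly homotoped so that their images avoid $\mathcal C \cup e_i$.

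The main (and essentially only) obstacle is to verify that the sliding operation preserves the stratum-wise form $f(e_k) = w_k e_k u_k$ with $w_k, u_k \in \Gamma_{k-1}$; this is exactly what Lemma \ref{sliding-lemma} together with Remark \ref{converse-edge} are set up to deliver, the key computation being $w_k' = f(\gamma^{-1})\gamma = u_i^{-1}\gamma^{-1}\gamma$, which reduces to $u_i^{-1} \subset \Gamma_{k-1}$. Termination is then clear: after finitely many slides no edge attaches to $\mathcal C$, so $\mathcal C$ can be collapsed, strictly decreasing the (finite) number of inessential components of $\Gamma_1$. After exhausting all such components one has a special topological representative in the sense of Definition \ref{special-tt}, completing the proof.
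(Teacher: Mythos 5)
Your proposal is correct and follows essentially the same route as the paper: pass to the power given by Theorem \ref{top-rep}, relabel identity edges into the bottom stratum as in Convention \ref{u-non-trivial}, and then eliminate inessential components by collapsing or by the sliding operation of Lemma \ref{sliding-lemma} and Remark \ref{converse-edge}, with the same key reduction $w_k = f(\gamma^{-1})\gamma \simeq u_i^{-1}$. No substantive differences or gaps relative to the paper's argument.
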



\section{Iterated Dehn twists}
\label{sec-5}

Let $\hat \phi \in \Out(\FN)$ be any outer automorphism of $\FN$.
The goal of this section is to derive algorithmically 
from a 
special topological 
representative of $\hat \phi$ 
as provided by Proposition \ref{special-prop} 
an {\em iterated Dehn twist} representative of $\hat\phi$.  This is a new object which will be defined in this section.

\medskip

We recall from 
Definition \ref{special-tt}
that a special topological representative 
of $\hat \phi \in \Out(\FN)$ is given through
a graph $\Gamma$ and a filtration $V(\Gamma) =\Gamma_0 \subset \Gamma_1 \subset \ldots \subset \Gamma_m = \Gamma$ 
and a special graph map $f: \Gamma \to \Gamma$
which preserves the filtration
and induces $f$ via some marking isomorphism $\FN \cong \pi_1 \Gamma$ the automorphism $\hat \phi$.
Here ``special graph map'' means that furthermore the following conditions hold:
\begin{enumerate}
\item
Every connected component of $\Gamma_1$ has non-trivial fundamental group and is pointwise fixed by $f$.
\item
Every stratum $\Gamma_j \smallsetminus \Gamma_{i-1}$ with $i \geq 2$ consists of a single edge $e_i$, with $f(e_i) = u_i e_i v_i$, where $u_i$ and $w_i$ are closed paths in $\Gamma_{i-1}$.
\end{enumerate}
%
Notice that from condition (1) 
it
follows immediately that $f$ maps every connected component of any of the
invariant subgraphs
$\Gamma_i$ of $\Gamma$ to itself. Notice also that from condition (2) and the connectedness of $\Gamma$ it follows that $\Gamma_{m-1}$ consists either of a singles connected component $\Gamma_{m-1}^0$ or of two connected components $\Gamma_{m-1}^0$ and $\Gamma_{m-1}^1$. Thus we can always assume that the initial vertex $\iota(e_m)$ of the edge $e_m$ is situated in the component $\Gamma_{m-1}^0$.

\begin{lem}
\label{inductive-step}
Let $f: \Gamma \to \Gamma$ be a special graph map with respect to a filtration  $V(\Gamma) =\Gamma_0 \subset \Gamma_1 \subset \ldots \subset \Gamma_m = \Gamma$. Assume that for every connected component $\Gamma_{m-1}^j$ of $\Gamma_{m-1}$ there is given a group $G_j$, a marking isomorphism $\theta_j: \pi_1(\Gamma_{m-1}^j, v_j) \overset{\cong}{\longrightarrow} G_j$ (for some vertex $v_j \in \Gamma_j$), and 
a group homomorphism $\phi_j: G_j \to G_j$
which satisfies
$\phi_j = \theta_j \circ f_{*, v_j}|_{\pi_1(\Gamma_{m-1}^j, v_j)} \theta_j^{-1}$

Define $\cal G$ to be the graph-of-groups which consists of a single edge $E_m$, as well as a vertex $V_j$ for each connected component $\Gamma_{m-1}^j$ of $\Gamma_{m-1}$, such that the initial or terminal vertex of $E_m$ is attached to $V_j$ if and only if the corresponding vertex of the only edge $e_m$ in the stratum $\Gamma_m \smallsetminus \Gamma_{m-1}$ is attached to the component $\Gamma_{m-1}^j$. 
One defines the edge group $G_{E_m}$ to be trivial, and each vertex groups $G_{V_j}$ to be equal to $G_j$. 

Define a graph-of-groups isomorphism $H_m: \cal G_m \to \cal G_m$ by setting $H_{E_m} = id$ and $H_{V_j} = \phi_j$, and (after recalling the above convention that $\iota(e_m) \in \Gamma_{m-1}^0$) by setting the correction term for the reversed edge $\bar E_m$ equal to 
$\delta({\bar E_m}) :=  
\theta_{0}(\bar\gamma \, u_m \, f(\gamma)) \in G_0$, where 
$\gamma$ denotes a path from ${\tau(\bar e_m)}$ to the vertex $v_{0}$.
The correction term for the edge $E_m$ is given analogously by 
$\delta({E_m}) = 
\theta_j(\bar\gamma' \, w_m \, f(\gamma')) \in G_j$, for $j = 0$ or $j = 1$, 
where we denote by $\gamma'$ a path from ${\tau(e_m)}$ to the vertex $v_j$. 


Then the maps $\psi_0 :=\theta_0^{-1}: G_0 \to \pi_1(\Gamma_{m-1}^0, v_0) \subset \pi_1(\Gamma, v_0)$ and 
$\psi_1: G_j \to  \pi_1(\Gamma, v_0)$ given by $\psi_1(g) := \gamma^{-1} e_m \gamma' \theta_j^{-1}(g) \gamma'^{-1} \bar e_m \gamma$, 
together with the definition $\psi(t_{E_m}) = \gamma^{-1} e_m \gamma' $ in the case $V_1 = V_0$, 
define an isomorphism $\psi:  \pi_1(\cal G, V_0) \to \pi_1(\Gamma, v_0) $ which satisfies 
$$f_{*v_0} = \psi \circ H_{* V_0} \circ \psi^{-1}$$
\end{lem}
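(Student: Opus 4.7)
My plan is to split the proof into two parts: first, verify that $\psi$ is a well-defined isomorphism, and second, check the conjugation identity on a generating set of $\pi_1(\cal G, V_0)$.

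For the first part, the key point is the hypothesis that $G_{E_m}$ is trivial. This forces $\pi_1(\cal G, V_0)$ to split as an HNN-extension $G_0 * \langle t_{E_m}\rangle \cong G_0 * \Z$ in the one-vertex case $V_0 = V_1$, and as a free product $G_0 * t_{E_m}G_1 t_{E_m}^{-1}$ in the two-vertex case. On the topological side, van Kampen's theorem applied to the decomposition $\Gamma = \Gamma_{m-1} \cup e_m$, with the path $\gamma^{-1} e_m \gamma'$ used to translate $\pi_1(\Gamma_{m-1}^j, v_j)$ into $\pi_1(\Gamma, v_0)$, produces the analogous free-product decomposition. I would then check that $\psi$ sends each abstract free factor isomorphically onto its topological counterpart via $\theta_j^{-1}$ (on vertex groups) and via the loop $\gamma^{-1} e_m \gamma'$ (on the stable letter or conjugating element), hence is an isomorphism.

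For the conjugation identity, it suffices to verify $\psi \circ H_{*V_0} = f_{*v_0} \circ \psi$ on generators. On elements $g \in G_0$ it is immediate from the defining relation $\phi_0 = \theta_0 \circ f_{*v_0}|_{\pi_1(\Gamma_{m-1}^0,v_0)} \circ \theta_0^{-1}$, and the case $g \in G_1$ in the two-vertex setting reduces to this after noting that $H_{*V_0}$ acts on $t_{E_m} g t_{E_m}^{-1}$ by conjugating the image $\phi_1(g)$ by $H_*(t_{E_m})$. The substantial case concerns $t_{E_m}$ itself: in $\Pi(\cal G)$ one has $H_*(t_{E_m}) = \delta(\bar E_m)\, t_{E_m}\, \delta(E_m)^{-1}$, so $\psi(H_*(t_{E_m}))$ is a product of three loops in $\Gamma$. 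On the other hand, $f_{*v_0}(\gamma^{-1} e_m \gamma') = f(\gamma)^{-1}\cdot f(e_m) \cdot f(\gamma')$, and using the decomposition of $f(e_m)$ into prefix $u_m$ at $\iota(e_m)$, the edge $e_m$, and suffix $w_m$ at $\tau(e_m)$, this loop factors as a loop in $\Gamma_{m-1}^0$ based at $v_0$, followed by $\gamma^{-1} e_m \gamma'$, followed by a loop in $\Gamma_{m-1}^j$ translated to $v_0$. The correction terms $\delta(\bar E_m) = \theta_0(\bar\gamma\, u_m\, f(\gamma))$ and $\delta(E_m) = \theta_j(\bar\gamma'\, w_m\, f(\gamma'))$ are defined precisely so that their $\psi$-images coincide with these two loop classes.

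The main obstacle is bookkeeping: carefully aligning the basepoint translation paths $\gamma, \gamma'$ with their $f$-images $f(\gamma), f(\gamma')$ (which share endpoints with $\gamma, \gamma'$ since all vertices are fixed by $f$), so that the loop-level identity in $\pi_1(\Gamma, v_0)$ mirrors the abstract relation in $\Pi(\cal G)$. Once the core loop identity is in place, the conjugation relation extends automatically by the homomorphism property to all of $\pi_1(\cal G, V_0)$.
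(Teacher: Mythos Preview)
Your proposal is correct and is precisely the kind of definition-chase the paper has in mind: the paper's own proof consists of a single sentence declaring this to be ``an elementary exercise in chasing through the definition of a graph-of-groups isomorphism \ldots\ and all the necessary identifications needed there.'' Your outline---using the trivial edge group to identify $\pi_1(\cal G,V_0)$ with the van~Kampen decomposition of $\pi_1(\Gamma,v_0)$, and then verifying $\psi\circ H_{*V_0}=f_{*v_0}\circ\psi$ on the vertex-group generators and on $t_{E_m}$ via the explicit correction terms---is exactly how one carries out that exercise, so there is nothing to add or correct beyond careful attention to the sign/order conventions for $H_*(t_{E_m})$ and for the prefix/suffix labels of $f(e_m)$, which are not entirely consistent across the paper.
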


\begin{proof}
This is an elementary exercise in chasing through the definition of a graph-of-groups isomorphism (see subsection \ref{sec-gog's}) and all the necessary identifications needed there.

\end{proof}

We note immediately that the graph-of-groups automorphism $H: \cal G \to \cal G$ provided in the last lemma is a partial Dehn twist relative to the full subset $\cal V = V(\cal G)$ of vertices of $\cal G$, see
subsection \ref{sec-gog's}. 
We'd like to point the reader's attention here to the fact that the graph-of-groups $\cal G$ is indeed minimal: The whole point of introducing ``special topological representatives'', and laboring through the previous section in order to get rid of the inessential connected components, is precisely to guarantee this minimality condition.

This gives rise to the following iterative definition:

\begin{defn}
\label{k-level-d-twist}
We first define an 
iterated Dehn twist $D: \cal G \to \cal G$ of level $k = 1$ to be simply a classical graph-of-groups Dehn twist, see 
subsection \ref{sec-gog's}. 
For $k \geq 2$ we define an 
iterated Dehn twist $D: \cal G \to \cal G$ of level $k$ to be a partial Dehn twist relative to $\cal V = V(\cal G)$, where we assume that on each vertex group $G_v$ of $\cal G$ the automorphism $D_v: G_v \to G_v$ is induced by some 
iterated Dehn twist $D^v: \cal G_v \to \cal G_v$ of level $h_v \leq k-1$, through some isomorphism $G_v \cong \pi_1 \cal G_v$.
\end{defn}

By formal reasons we count the identity automorphism as iterated Dehn twist of level $0$.
By definition 
an 
iterated Dehn twist of level $k = 1$ is precisely an traditional Dehn twist as defined in 
subsection \ref{sec-gog's}.
We see immediately that 
an iterated Dehn twist of level $k = 2$ is precisely a 
partial
Dehn twist relative to a family of local Dehn twists as considered in \cite{KY02}, see
subsection \ref{partial-rel-local}. We can now prove:

\begin{prop}
\label{k-level-rep}
Every polynomially growing automorphism $\hat \phi \in \Out(\FN)$ 
has a positive power which
can be represented by an 
iterated Dehn twist $D: \cal G \to \cal G$ of some level $k \geq 0$.

The graph-of-groups $\cal G$ together with all iteratively given data for the vertex groups as well as the automorphism $D$ can be derived algorithmically from any special topological representative $f: \Gamma \to \Gamma$, and hence from $\hat \phi$ itself. If $\Gamma$ has $m$ strata, then one obtains $k \leq m-1$.
\end{prop}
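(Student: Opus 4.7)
The plan is to prove the proposition by induction on the number $m$ of strata in the filtration $V(\Gamma) = \Gamma_0 \subset \Gamma_1 \subset \ldots \subset \Gamma_m = \Gamma$ of a special topological representative $f: \Gamma \to \Gamma$, which by Proposition \ref{special-prop} can be chosen for some positive power of $\hat\phi$.

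For the base case $m = 1$ one has $\Gamma = \Gamma_1$, so by condition (1) of Definition \ref{special-tt} together with connectedness of $\Gamma$ the map $f$ fixes $\Gamma$ pointwise, and therefore represents the identity outer automorphism. This counts formally as an iterated Dehn twist of level $k = 0$, with the bound $k \leq m - 1$ satisfied.

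For the inductive step with $m \geq 2$, observe that each connected component $\Gamma_{m-1}^j$ of $\Gamma_{m-1}$ (there are at most two, by connectedness of $\Gamma$ combined with condition (2) of Definition \ref{special-tt}) inherits from $\Gamma$ the restricted filtration with at most $m - 1$ strata, and the restriction $f|_{\Gamma_{m-1}^j}$ is itself a special graph map in the sense of Definition \ref{special-tt}. By the inductive hypothesis, a suitable positive power $\phi_j^{s_j}$ of the induced vertex group automorphism $\phi_j$ is represented by an iterated Dehn twist of level $\leq m - 2$. Choosing a common positive power $t$ (for instance, the LCM of the $s_j$, or the uniform exponent $t_n$ from Theorem \ref{top-rep}), all $\phi_j^t$ become iterated Dehn twists of level $\leq m - 2$ simultaneously. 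Now apply Lemma \ref{inductive-step} to $f^t$ (and the vertex groups with their newly computed iterated Dehn twist structures): this yields a graph-of-groups $\cal G$ with a single top edge $E_m$, vertex group automorphisms $H_{V_j} = \phi_j^t$, and correction terms $\delta(E_m), \delta(\bar E_m)$ read off from the paths $u_m, w_m$ associated to $e_m$ under $f^t$ via the explicit formulas of Lemma \ref{inductive-step}. Since each vertex group automorphism is now an iterated Dehn twist of level $\leq m - 2$, by Definition \ref{k-level-d-twist} the resulting $H: \cal G \to \cal G$ is an iterated Dehn twist of level $\leq m - 1$ representing $\hat\phi^t$.

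The algorithmic nature of the construction follows because all operations involved — decomposing $\Gamma_{m-1}$ into components, restricting $f$ and the filtration, iterating to produce $f^t$, extracting $u_m, w_m$ to form the correction terms, and recursively invoking the construction on each vertex subgraph — are finite combinatorial procedures, and the uniform exponent $t_n$ from Theorem \ref{top-rep} ensures that a single fixed power $t_n$ of $\hat\phi$ works simultaneously at every level of the recursion. The principal obstacle I anticipate is bookkeeping: one has to carefully track base points and the connecting paths $\gamma, \gamma'$ used in Lemma \ref{inductive-step}, compose markings through each level of the recursion, and verify at each level that the underlying graph-of-groups is minimal as required by the partial Dehn twist condition — minimality at the top level being precisely the reason for the elimination of inessential components carried out in Section \ref{sec-4}, and minimality at lower levels following by induction.
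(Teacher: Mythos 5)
Your proof follows essentially the same route as the paper's: induction on the number $m$ of strata, decomposition of $\Gamma_{m-1}$ into at most two $f$-invariant connected components whose restrictions are again special graph maps, and assembly of the top level via Lemma \ref{inductive-step}. The only difference is that you take further positive powers inside the inductive step; the paper avoids this (and the attendant LCM bookkeeping) by in effect proving the stronger inductive statement that the automorphism induced by \emph{any} special graph map --- not merely a power of it --- is represented by an iterated Dehn twist of level $\leq m-1$, so that the single power taken once at the outset to obtain the special topological representative already suffices.
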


\begin{proof}
The proof of this proposition is done by induction over the number $m \geq 1$ of strata in the given filtration of the special self map $f:\Gamma \to \Gamma$.  

If $m = 1$, then every edge of $\Gamma$ is fixed, so that $\hat \phi$ is the identity automorphism.

We now consider the case $m \geq 2$. As in the situation considered in Lemma \ref{inductive-step}, $\Gamma_{m-1}$ consists of either one or two connected components $\Gamma_{m-1}^j$, and the restriction of $f$ to each of them is a special graph map with $m-1$ or less strata. Thus we can apply our induction hypothesis to obtain 
iterated Dehn twists $D_j: \cal G_j \to \cal G_j$ 
of level $k_j \leq m - 2$
which represent $f|_{\pi_1 \Gamma_{m-1}^j}$.

We now apply Lemma \ref{inductive-step} to derive algorithmically from the edge of the top stratum of $\Gamma$ together with the iterated Dehn twists $D_j$ for the connected components of $\Gamma_{m-1}$ the desired partial Dehn twist representative relative to the family of 
iterated Dehn twists $D_j: \cal G_j \to \cal G_j$ of level $k_j$. This proves our proposition.
%

%

\end{proof}

We now come to the main result of this section:

\begin{thm}
\label{dichotomy0}
Every 
iterated Dehn twist $D: \cal G \to \cal G$ of some level $k \geq 1$ either induces a Dehn twist automorphism on $\pi_1 \cal G$, or else there are conjugacy classes in $\pi_1 \cal G$ which have at least quadratic growth.

This dichotomy can be decided algorithmically.

\end{thm}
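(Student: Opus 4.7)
The plan is to prove the theorem by induction on the level $k \geq 1$ of the iterated Dehn twist $D: \cal G \to \cal G$. The base case $k = 1$ is immediate: by Definition \ref{k-level-d-twist}, a level-$1$ iterated Dehn twist is a classical Dehn twist on $\cal G$, so $\hat D$ is tautologically a Dehn twist automorphism on $\pi_1 \cal G$ and there is nothing further to decide.

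For the inductive step, assume $k \geq 2$ and that the theorem has been established for all levels at most $k - 1$. Each vertex group automorphism $D_v: G_v \to G_v$ is, by Definition \ref{k-level-d-twist}, induced by an iterated Dehn twist $D^v: \cal G_v \to \cal G_v$ of level $h_v \leq k - 1$ through an identification $G_v \cong \pi_1 \cal G_v$. Apply the induction hypothesis to each $D^v$: algorithmically one either certifies that $D_v$ is a Dehn twist automorphism on $G_v$, or produces a conjugacy class in $G_v$ with at least quadratic growth under iteration of $D_v$.

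If the latter alternative holds at some vertex $v_0$, then since $G_{v_0}$ sits inside $\pi_1 \cal G$ and $\hat D$ restricts on $G_{v_0}$ to $D_{v_0}$ up to an inner automorphism, any conjugacy class of $G_{v_0}$ witnessing quadratic growth of $D_{v_0}$ also witnesses at least quadratic growth of $\hat D$ on $\pi_1 \cal G$, and we are finished. Otherwise every $D_v$ is a Dehn twist automorphism, so each $G_v$ comes equipped with a graph-of-groups decomposition $\cal G_v'$ on which $D_v$ is realized by a classical Dehn twist. Viewing $D$ with these refined vertex group data turns it into a partial Dehn twist on $\cal G$ relative to a family of local Dehn twists, i.e. an iterated Dehn twist of level $2$ in the sense of subsection \ref{partial-rel-local}. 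Corollary \ref{q-growth+} then gives the dichotomy: either $\hat D$ is itself a Dehn twist automorphism on $\pi_1 \cal G$, or it has at least quadratic growth. Corollary \ref{rel-D-twists} renders the decision algorithmic via the test on whether every correction term is locally zero. The minimality hypothesis of $\cal G$ required there is automatic, since it is built into the definition of a partial Dehn twist, see subsection \ref{sec-gog's}.

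The main obstacle lies in justifying that quadratic growth detected inside a vertex group $G_{v_0}$ transfers faithfully up to $\pi_1 \cal G$ under $\hat D$; this rests on the standard fact that graph-of-groups length in $\pi_1 \cal G$ dominates vertex group length up to a bounded additive constant determined by the shape of $\Gamma(\cal G)$, so that asymptotic growth rates of conjugacy classes are preserved under the inclusion $G_{v_0} \hookrightarrow \pi_1 \cal G$. A secondary but routine point is to verify that reassembling the level-$k$ data into level-$2$ data in the second alternative yields an honest partial Dehn twist relative to local Dehn twists; this is a direct unpacking of the definitions of subsections \ref{sec-gog's} and \ref{partial-rel-local}, the minimality hypothesis being already in place.
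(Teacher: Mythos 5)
Your proof is correct and follows essentially the same route as the paper's: the paper organizes the argument as a bottom-up iteration that applies Corollary \ref{q-growth+} to each level-$2$ sub-iterated Dehn twist and thereby lowers the total level from $k$ to $k-1$, which is exactly what your top-down induction on $k$ unwinds to. Your explicit remark that quadratic growth detected in a vertex group transfers to $\pi_1 \mathcal{G}$ (via undistortion of the vertex groups, which are free factors here since the edge groups are trivial for level $\geq 2$) is a point the paper leaves implicit, but it is the same argument.
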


\begin{proof}
If $k = 1$, then $D$ is an traditional Dehn twist. For $k \geq 2$ one considers iteratively any of the vertex groups and descends to some sub-iterated Dehn twist of level $k = 2$. One then applies the main result of \cite{KY02} (see Corollary \ref{q-growth+})
which gives precisely the looked-for dichotomy for this sub-iterated Dehn twist. In case where a conjugacy class with at least quadratic growth is found, the proof is finished. In the other case one proceeds with the next sub-iterated  Dehn twist of level $k = 2$. If all of them are induce Dehn twist automorphisms, we can replace all of the given data for the sub-iterated Dehn twists of level 2 by traditional Dehn twists, i.e. iterated Dehn twists of level 1.. This lowers the level of the total iterated Dehn twist from $k$ to $k-1$. Hence proceeding iteratively proves the claim.

\end{proof}

From the last theorem and the previously derived material we obtain:

\begin{cor}
\label{dichotomy0-cor}
Every polynomially growing automorphism $\hat \phi \in \Out(\FN)$ has either polynomial growth of degree $d \geq 2$ or else has a positive power which is a Dehn twist automorphism. This dichotomy can be decided algorithmically.

\qed
\end{cor}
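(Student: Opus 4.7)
The plan is to derive this corollary as an essentially immediate synthesis of Proposition \ref{k-level-rep} and Theorem \ref{dichotomy0}, so that the work consists in assembling these two inputs and verifying that the dichotomy for a positive power $\hat\phi^t$ transfers back to $\hat\phi$ itself. First I would apply Proposition \ref{k-level-rep} to the given polynomially growing $\hat\phi \in \Out(\FN)$: this yields algorithmically a positive power $\hat\phi^t$ (with $t = t_n$ depending only on $n$) together with a graph-of-groups $\cal G$ and an iterated Dehn twist $D: \cal G \to \cal G$ of some level $k \geq 0$ with $\hat D = \hat\phi^t$, where all the layered data for $D$ is produced explicitly.

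Next I would feed $(\cal G, D)$ into Theorem \ref{dichotomy0}. If $k = 0$ then $D$ is the identity and $\hat\phi^t$ is trivially a Dehn twist automorphism; if $k \geq 1$ the theorem algorithmically decides one of two alternatives: either $D$ induces a Dehn twist automorphism on $\pi_1 \cal G$, in which case $\hat\phi^t$ is itself a Dehn twist automorphism and we are in the second branch of the corollary, or else the algorithm exhibits conjugacy classes in $\pi_1 \cal G \cong \FN$ whose growth under $D_*$ is at least quadratic.

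To finish I need to translate this second outcome from $\hat\phi^t$ back to $\hat\phi$. The key observation is that for any $g \in \FN$ and any fixed conjugacy length function on $\FN$, the polynomial growth degree of the sequence $\{|\hat\phi^{ts}([g])|\}_{s \in \N}$ agrees with the polynomial growth degree of $\{|\hat\phi^{s}([g])|\}_{s \in \N}$: passing to a positive power preserves the degree of polynomial growth (it only rescales the leading coefficient). Hence if some conjugacy class has at least quadratic growth under $\hat\phi^t$, the same class has at least quadratic growth under $\hat\phi$, placing us in the first branch of the corollary.

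The main obstacle, such as it is, is really just this bookkeeping about growth under powers, together with the care that the identity case $k = 0$ and the level-one case $k = 1$ both fall under the label \emph{Dehn twist automorphism} (with the identity treated as the degenerate Dehn twist on a trivial graph-of-groups). Everything substantive has already been done in sections \ref{sec-4} and \ref{sec-5}, and the algorithmic nature of the dichotomy is inherited directly from the algorithmic nature of Proposition \ref{k-level-rep} and Theorem \ref{dichotomy0}.
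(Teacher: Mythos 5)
Your proposal is correct and matches the paper's (implicit) argument exactly: the corollary is stated with \qed precisely because it is the immediate combination of Proposition \ref{k-level-rep} and Theorem \ref{dichotomy0}, just as you assemble it. Your added remark that the polynomial growth degree of a conjugacy class is unchanged under passing to a positive power is the right (standard) bookkeeping step to transfer the quadratic-growth alternative from $\hat\phi^t$ back to $\hat\phi$.
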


Since every Dehn twist automorphism is known to have linear growth, this also shows Corollary \ref{CLW}
from the Introduction.

\section{Surface homeomorphisms}
\label{sec-6}

Given any collection $\cal C = \{c_1, \ldots, c_r\}$ of pairwise disjoint non-parallel essential curves $c_i$ 
with annulus neighborhood 
in a possibly non-orientable compact surface $S$ with or without boundary, it is well known that $\cal C$ together with the complementary surfaces $S_j \subset S$ define a graph-of-groups $\cal G_{\cal C}$: the underlying graph $\Gamma(\cal G_{\cal C})$
has a vertex $v_j$ for each $S_j$, and for each $c_i$  an edge $e_i$ which can be thought of as ``transverse'' to $c_i$, in the sense that $e_i$ connects the two vertices $v_j$ and $v_{j'}$ which correspond to the two subsurfaces $S_j$ and $S_{j'}$ adjacent to $c_i$. For the vertex groups one sets canonically $G_{v_j} = \pi_1 S_j$, and for the edge groups $G_{e_i} = \pi_1 c_i \cong \Z$, and the edge injections are induced by the topological inclusions of the $c_i$ as boundary curves of the $S_j$. Van Kampen's theorem then gives directly: 
$$\pi_1 \cal G_{\cal C} = \pi_1 S$$

\medskip

For any surface homeomorphism $h: S \to S$ one now uses the groundbreaking Nielsen-Thurston classification for mapping classes (see section \ref{NT-classification}), to obtain a collection $\cal C = \cal C(h)$ of essential simple curves as above, so that (after possibly replacing $h$ by a positive power) every $c_i$ and every $S_j$ is fixed by $h$. More precisely, after improving $h$ by an isotopy, the restriction $h_j: S_j \to S_j$ of $h$ is either the identity homeomorphism, or else it is a pseudo-Anosov automorphism, and furthermore $h$ twists around each $c_i$ an integer number $n_i$ of times. We summarize this in the following well known consequence of Nielsen-Thurston theory:

\begin{prop}
\label{Thurston}
Let $h: S \to S$ be a homeomorphism of a surface $S$ as above. Assume that $S$ has at least one boundary component, and let $\hat\phi \in \Out(\FN)$ be induced by $h$ via some identification isomorphism $\pi_1 S \cong \FN$.

\smallskip
\noindent
(1) If any of the canonical subsurface restrictions $h_j: S_j \to S_j$ of a suitable positive power $h^t$ of $h$ is pesudo-Anosov, then $\hat \phi$ has exponential growth.

\smallskip
\noindent
(2) If none of the $h_j: S_j \to S_j$ is pseudo-Anosov, then 
$h^t$ 
is a multiple surface Dehn twist, and $\hat \phi^{\, t}$ is a Dehn twist automorphism.
\qed
\end{prop}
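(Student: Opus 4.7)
The plan is to apply the Nielsen--Thurston classification to $h$. Invoke Theorem \ref{Thurston+} (together with Remark \ref{non-orientable} in the non-orientable case) to obtain, after replacing $h$ by a suitable power $h^m$, a homeomorphism $h'$ isotopic to $h^m$ that preserves a decomposition of $S$ along a collection $\cal C = \{c_1, \ldots, c_r\}$ of disjoint essential simple closed curves with annulus neighborhoods into subsurfaces $S_j$, with each restriction $h_j := h'|_{S_j}$ equal to the identity, a Dehn twist on an annulus, or a pseudo-Anosov homeomorphism. Replacing $m$ by a further multiple $t$ ensures that every $c_i$ and every $S_j$ is setwise fixed by $h^t$ with preserved orientation, so that the induced graph-of-groups automorphism $H = (h^t)_* : \cal G_{\cal C} \to \cal G_{\cal C}$ (where $\pi_1 \cal G_{\cal C} \cong \pi_1 S \cong \FN$, as explained in the opening paragraph of this section) acts as the identity on the underlying graph.

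For assertion (1), suppose some $h_j$ is pseudo-Anosov. Classical Nielsen--Thurston theory produces a conjugacy class $[w]$ in $\pi_1 S_j$ whose cyclically reduced word length grows like $\lambda^n$ under iteration of $(h_j)_*$, where $\lambda > 1$ is the dilatation. Since $\pi_1 S_j$ embeds as a vertex group of the graph-of-groups $\cal G_{\cal C}$, and in this free-group setting vertex groups are quasi-isometrically embedded in $\pi_1 \cal G_{\cal C}$, the word length in $\pi_1 S$ of $(h^t)^n_*(w)$ also grows exponentially in $n$. Hence $\hat \phi^{\, t}$, and therefore $\hat \phi$ itself, has exponential growth.

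For assertion (2), assume no $h_j$ is pseudo-Anosov. Each $h_j$ is then either the identity or a Dehn twist on an annulus, and any annular twist can be absorbed into twists around the corresponding adjacent curve $c_i \in \cal C$. Thus $h^t$ is isotopic to a multiple surface Dehn twist $\tau_{c_1}^{n_1} \circ \cdots \circ \tau_{c_r}^{n_r}$ for integers $n_i \in \Z$. A direct Van Kampen computation on $\cal G_{\cal C}$ then shows that $H$ has trivial vertex and edge group automorphisms, and that for each edge $e_i$ the correction term $\delta(e_i)$ equals $g_{e_i}^{n_i}$, where $g_{e_i}$ is a generator of $G_{e_i} \cong \pi_1 c_i \cong \Z$. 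Since $g_{e_i}^{n_i}$ lies in $f_{e_i}(G_{e_i})$ and hence centralizes it trivially, this is exactly the defining data of a classical Dehn twist in the sense of subsection \ref{sec-gog's}, so $\hat \phi^{\, t} = H_*$ is a Dehn twist automorphism. The main obstacle is the bookkeeping in this last step: one must trace carefully through the Van Kampen / Bass--Serre identifications to confirm that the topological twist $\tau_{c_i}^{n_i}$ contributes exactly $g_{e_i}^{n_i}$ to the correction term $\delta(e_i)$, with no residual contribution to the other correction terms or to the vertex and edge group automorphisms.
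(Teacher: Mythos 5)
Your proposal is correct and follows exactly the route the paper intends: the paper states Proposition \ref{Thurston} with no proof, presenting it as a ``well known consequence'' of Theorem \ref{Thurston+}, Remark \ref{non-orientable}, and the dual graph-of-groups $\cal G_{\cal C}$ constructed at the start of section \ref{sec-6}, and your argument simply fills in those standard details (exponential stretching of non-peripheral conjugacy classes by the pseudo-Anosov piece, undistortedness of vertex groups for part (1); absorption of annular twists into the curves $c_i$ and the identification of the correction terms $\delta(e_i)$ with $f_{e_i}(g_{e_i}^{n_i})$ for part (2)). No gaps beyond the level of detail the paper itself omits.
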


Indeed, in case (2) of the above proposition, the multiple surface Dehn twist $h^t$ gives immediately rise to a graph-of-groups Dehn twist $D_h: \cal G_{\cal C(h)} \to \cal G_{\cal C(h)}$ on the above graph-of-groups decomposition of $\pi_1 S$ dual to the collection $\cal C(h)$. 
In this case we adopt the convention that any of the curves $c_i \in \cal C$ on which $h^t$ doesn't twist at all is dropped from the collection $\cal C$, so that the twist exponent of every $c_i$ satisfies $n_i \neq 0$.

We now observe:

\begin{lem}
\label{geometric-efficient}
The above Dehn twist $D_h: \cal G_{\cal C(h)} \to \cal G_{\cal C(h)}$ is efficient.
\end{lem}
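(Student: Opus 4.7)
The plan is to verify each of the five conditions of Definition \ref{efficient-D-twists-defn} in turn, using the dictionary between $\cal G_{\cal C(h)}$ and the surface decomposition. In this dictionary, vertices $v_j$ correspond to complementary subsurfaces $S_j$ (all with at least one boundary component, since $\cal C$ is non-empty or $S$ has boundary), edges $e_i$ correspond to curves $c_i \in \cal C(h)$ with annular neighborhoods (see Remark \ref{non-orientable}), vertex groups are $\pi_1 S_j$, edge groups are $\pi_1 c_i \cong \Z$, and each edge map $f_{e_i}: G_{e_i} \to G_{\tau(e_i)}$ is induced by the inclusion of $c_i$ as a boundary loop of the adjacent subsurface. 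The twistor $z_{e_i}$ of the graph-of-groups Dehn twist is $g_{e_i}^{n_i}$, where $n_i \neq 0$ is the surface twist exponent of $h^t$ on $c_i$.

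For condition (1), if $v_j$ has valence one via $e_i$ and $f_{e_i}$ is surjective, then $\pi_1 S_j$ is infinite cyclic generated by the boundary loop $c_i$, which forces $S_j$ to be an annulus (the Möbius-band option is ruled out since the boundary represents a proper power there, preventing surjectivity). An annular $S_j$ makes $c_i$ parallel in $S$ either to a boundary component of $S$ (contradicting $c_i$ essential) or to another $c_{i'} \in \cal C$ (contradicting non-parallel). Condition (2) is analogous: a valence-two $v_j$ with both edge maps surjective forces $S_j$ to be an annulus whose two boundary components are both elements of $\cal C$, again contradicting non-parallel. Condition (3) is immediate from the convention that curves with zero twist are dropped from $\cal C(h)$, so every $n_i \neq 0$ and hence $z_{e_i} \neq 1$.

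For condition (4), one needs that a boundary loop generates a \emph{maximal} cyclic subgroup of $\pi_1 S_j$; this is a classical fact for compact surfaces, provided no $S_j$ is a Möbius band with $c_i$ as its boundary. Similarly, for condition (5), given two oriented edges $e_1 \neq e_2$ both ending at $v_j$, the images $f_{e_1}(z_{e_1}^{m_1})$ and $f_{e_2}(z_{e_2}^{m_2})$ are non-trivial powers of boundary loops of $S_j$; one splits into two sub-cases (distinct underlying curves $c_1 \neq c_2$, or $e_2 = \bar e_1$ where $c_1$ is non-separating and the two sides of its annular neighborhood give two distinct boundary components of $S_j$), and in each case invokes the classical fact that powers of distinct boundary loops of a compact surface are non-conjugate in its fundamental group.

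The main obstacle is handling potential Möbius-band complementary pieces $S_j$ in the non-orientable case: a boundary loop of a Möbius band sits as index-two in $\pi_1 S_j \cong \Z$, which would spoil conditions (1) and (4). The resolution is to observe that a Dehn twist on a curve bounding a Möbius band can be absorbed into the Möbius band (up to isotopy of $h$), so that such curves need not appear in the Nielsen–Thurston reducing collection $\cal C(h)$; combined with the annular-neighborhood reduction of Remark \ref{non-orientable}, this guarantees that no complementary piece $S_j$ is a Möbius band with its boundary in $\cal C(h)$, and all five conditions of Definition \ref{efficient-D-twists-defn} then follow from the classical surface-group facts cited above.
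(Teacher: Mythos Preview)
Your proof is correct and follows the same strategy as the paper's: verify each of the five conditions of Definition \ref{efficient-D-twists-defn} by translating them into statements about the surface decomposition and invoking classical facts about simple closed curves and boundary components of subsurfaces. You are in fact slightly more careful than the paper in handling the possibility that a complementary piece $S_j$ is a M\"obius band (relevant for conditions (1) and (4), and resolved via the observation from Remark \ref{non-orientable} that a twist on a curve bounding a M\"obius band is isotopically trivial there), but this is a refinement of the same argument rather than a different approach.
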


\begin{proof}
We go through the list of properties of an efficient Dehn twist as stated in Definition \ref{efficient-D-twists-defn}:

\smallskip
\noindent
(1)
$\cal{G}$ is minimal: 
Suppose by contradiction that there exists a valence 1 vertex $v$ with $\tau({e_{i}})=v_j$, and a surjective edge homomorphism $f_{e_{i}}: G_{e_{i}} \rightarrow G_{v_j}$, i.e. we have $G_{v_j} \cong G_{e_{i}} \cong \Bbb{Z}$. This implies that the vertex $v_j$ corresponds to a subsurface $S_j$ which is an annulus.
But then $S_j$ has a second boundary curve, so that $v_j$ would be of valence 2; a contradiction.
%

\smallskip
\noindent
(2)
No invisible vertex:
By construction of the graph-of-groups $\cal{G}$, an invisible vertex only occurs when there exist two simple closed curves $c_{i}$ and $c_{i'}$ are parallel to each other, which is again a contradiction to our assumption.

\smallskip
\noindent
(3)
No unused edges:
This derives from 
our above convention that each twist exponent $n_{i} \neq 0$, since every twistor $z_{e_{i}}$ is non-trivial. 

\smallskip
\noindent
(4)
No proper power:
Given the fact that each $c_{i}$ is an essential simple closed curve in $S$, the induced edge homomorphism $f_{e_i}$ must map the generator $g_{e_i}$ of edge group $G_{e_i} = \<g_{e_i}\>$ to an indivisible element in $G_{v_{j}}$, where $\tau(e_{i})=v_{j}$. 

The fact that the element $f_{e_i}(g_{e_i})$ is indivisible, i.e. it doesn't have a proper root in the group $G_{v_{j}} = \pi_1 S_j$, is a classical fact which can be derived for example from the uniqueness of geodesics in surfaces of constant negative curvature.


\smallskip
\noindent
(5)
Whenever two edges $e_{i}$ and $e_{i'}$ end at the same vertex $v_j$, then $e_{i}$ and $e_{i'}$ are not positively bonded.

Indeed, $e_{i}$ and $e_{i'}$ are neither positively nor negatively bonded, as in either case the corresponding boundary curves in the subsurface $S_j$ corresponding to $v_j$ would have to be parallel, which contradicts our assumptions.
%


\end{proof}

As a consequence of the last lemma we can now test efficiently whether a given Dehn twist automorphism $\hat \phi \in \Out(\FN)$ is geometric or not: It suffices to decide whether or not for each vertex group $G_v$ of some efficient Dehn twist representative $D: \cal G \to \cal G$ of $\hat \phi$ the 
family of 
twistors of the edges adjacent to $v$ define a ``boundary family'' as has been introduced in
Definition \ref{bdy-family}. We recall:


A family $w_1, \ldots, w_r \in \FN$ of elements is called a {boundary family} if there exists a surface $S$ with boundary curves $c_1, \ldots, c_s$ 
with $s \geq r$ 
such that for some identification isomorphism $\theta: \FN \to \pi_1 S $ one has (up to a permutation of the indices of the $c_j$) that every 
$\theta(w_i)$
determines the conjugacy class 
given by $c_i$ in $\pi_1 S$.

\begin{prop}
\label{boundary-set-prop}
An automorphism $\hat \phi \in \Out(\FN)$ represented by an efficient Dehn twist $D: \cal G \to \cal G$ is geometric if and only if for every vertex group $G_v$ of $\cal G$ and for the family of edges $e_i \in E(\cal G)$ with terminal vertex $\tau(e_i) = v$ the corresponding elements 
$f_{e_i}(g_{e_i})$ 
constitute a boundary family in $G_v$.  
Here 
$g_e$
denotes a generator of the cyclic group $G_e$.
\end{prop}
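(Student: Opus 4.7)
The plan is to prove both implications separately. For the ``only if'' direction, assume $\hat \phi$ is induced by a homeomorphism $h: S \to S$. Since $\hat \phi = \hat D$ is a Dehn twist automorphism and hence linearly growing, Corollary \ref{corquadratic}(2) combined with Proposition \ref{Thurston} produces a positive power $h^t$ that is a multiple surface Dehn twist along a collection $\cal C(h)$ of disjoint essential simple closed curves with annulus neighbourhoods (invoking Remark \ref{non-orientable} in the non-orientable case). The dual graph-of-groups $\cal G_{\cal C(h)}$ with its classical Dehn twist $D_h$ represents $\hat \phi^t$ and is efficient by Lemma \ref{geometric-efficient}. The power $D^t$ is also an efficient Dehn twist representing $\hat \phi^t$: conditions (1), (2), (4) of Definition \ref{efficient-D-twists-defn} depend only on the underlying graph-of-groups, while (3) and (5) are preserved under raising twistors to a common non-zero power. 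Theorem \ref{efficient-uniqueness} then supplies a graph-of-groups isomorphism $H: \cal G \to \cal G_{\cal C(h)}$ sending twistors to twistors. Vertex by vertex, $H$ restricts to an isomorphism $H_v: G_v \to \pi_1 S_j$ sending each edge generator $g_{e_i}$ (up to sign, since $H_e$ is an iso of infinite cyclic groups preserving twistors) to an edge generator whose image in $\pi_1 S_j$ is a boundary curve of $S_j$. Theorem \ref{boundary-collection} states that these boundary curves form a boundary family in $\pi_1 S_j$; transporting back via $H_v^{-1}$ yields the desired boundary family property for $\{f_{e_i}(g_{e_i}) : \tau(e_i) = v\}$ in $G_v$.

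For the ``if'' direction, the plan is to construct a surface by gluing. For each vertex $v$, Definition \ref{bdy-family} and Theorem \ref{boundary-collection} furnish a (possibly non-orientable) surface-with-boundary $S_v$ and an identification $G_v \cong \pi_1 S_v$ under which each element of $A_v := \{f_{e_i}(g_{e_i}) : \tau(e_i) = v\}$ represents, up to conjugacy, a distinct boundary component of $S_v$. For each edge $e$ of $\cal G$, I would then glue an annulus $A_e$ between the boundary curves of $S_{\iota(e)}$ and $S_{\tau(e)}$ representing $f_{\bar e}(g_e)$ and $f_e(g_e)$ respectively. An iterated Van Kampen computation along the graph $\Gamma(\cal G)$ reproduces exactly the graph-of-groups presentation, so $\pi_1 S \cong \pi_1 \cal G = \FN$; any ``extra'' boundary components of the $S_v$ not used in the gluing simply remain as boundary components of $S$ and contribute nothing to this fundamental group calculation. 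Defining $h: S \to S$ to be the identity on each $S_v$ and a Dehn twist of exponent $n(e)$ on each annulus $A_e$ yields a homeomorphism whose induced outer automorphism coincides with $\hat D = \hat \phi$, so $\hat \phi$ is geometric.

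The main obstacle I expect is bookkeeping in the ``only if'' direction: Theorem \ref{efficient-uniqueness} guarantees only that $H$ preserves twistors up to an inner-automorphism correction, so the argument requires careful tracking of conjugation data to control the images of the edge generators $g_{e_i}$ themselves (rather than merely their $n(e)$-th powers) and then of their images $f_{e_i}(g_{e_i})$ inside the vertex groups. On the ``if'' side an analogous but milder care is needed to verify that the Van Kampen assembly yields $\pi_1 S$ on the nose and that the Dehn-twist homeomorphism $h$ induces precisely $\hat D$ up to inner automorphism rather than some twist-related variant.
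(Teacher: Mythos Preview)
Your proposal is correct and follows essentially the same route as the paper: the ``only if'' direction via Nielsen--Thurston, Lemma~\ref{geometric-efficient}, and the uniqueness Theorem~\ref{efficient-uniqueness}; the ``if'' direction via gluing the vertex surfaces $S_v$ along annuli and reading off the multiple Dehn twist. Two minor remarks: your worry about Theorem~\ref{efficient-uniqueness} is slightly overstated, since the graph-of-groups isomorphism $H$ itself satisfies $H_e(z_e)=z_{H_\Gamma(e)}$ exactly (it is only the induced map on $\pi_1$ that is determined up to inner automorphism), and the compatibility $H_{\tau(e)}f_e = ad_{\delta(e)}\, f_{H_\Gamma(e)}H_e$ then gives $H_v(f_e(g_e))$ conjugate in $G_{H_\Gamma(v)}$ to $f_{H_\Gamma(e)}(g_{H_\Gamma(e)})^{\pm 1}$, which suffices for the boundary-family property; and in the ``if'' direction the paper records a sign $\epsilon_i$ on the twist exponents coming from the choice of edge generator, which you should include when specifying $h$.
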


\begin{proof}
If $\hat \phi$ is geometric, then for some surface $S$, some identification isomorphism $\pi_1 S \cong \FN$ and some homeomorphism $h: S \to S$ we have $\hat \phi = h_*$.  We now apply Proposition \ref{Thurston} to $h$. Its alternative (1) is ruled out by our hypothesis that $\hat \phi$ is a Dehn twist automorphism, as those are known to grow linearly (see Theorem \ref{Thurston+} and Corollary \ref{corquadratic}). From alternative (2) we obtain a Dehn twist representative $D_h: \cal G_{\cal C(h)} \to \cal G_{\cal C(h)}$ of some positive power of $h$, which by Lemma \ref{geometric-efficient} is efficient. From the uniqueness for efficient Dehn twist representatives (see Theorem \ref{efficient-uniqueness}) we obtain a graph-of-groups isomorphism $H: \cal G_{\cal C(h)} \to \cal G$ such that $D_h = H^{-1} D^t H$ for some $t \geq 1$. By definition for any vertex group $G_j$ of $\cal G_{\cal C(h)}$ the adjacent edge group generators define a boundary family in $G_j$. Since this property is preserved by the isomorphism $H$, we have shown the ``only if'' part of the claim.

To show the ``if'' direction of the claimed equivalence we use, for every vertex $v$ of $\cal G$, the surface $S_v$ given by the hypothesis on $G_v$ and by Definition \ref{bdy-family}, to construct a surface $S$ by ``tubing together'' the $S_v$ along annuli with core curve $c_i$ as prescribed by the edges $e_i$ of the underlying graph $\Gamma(\cal G)$. From the choice of the generator of each edge group $G_{e_i} \cong \Z$ we deduce the sign $\epsilon_i \in \{1, -1\}$, which together with the twist exponents $n(e_i)$ of $D$ define the homeomorphism $h: S \to S$ as multiple Dehn twist which twists at any $c_i$ precisely $\epsilon_i n(e_i)$ times. It follows directly from this construction that there is a canonical identification isomorphism $\pi_1 S = \FN$ which induces $h_* = \hat \phi$.

\end{proof}

From the last proposition we obtain directly:

\begin{cor}
\label{cor-b-sets}
The geometricity question for Dehn twist automorphisms of $\FN$ can be algorithmically decided if for any finite family $W$ of 
elements 
in a free group $F_m$ it can be algorithmically decided whether $W$ is a boundary family or not.

\qed
\end{cor}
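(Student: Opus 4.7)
My plan is to reduce the geometricity decision problem directly to the boundary-family decision problem via Proposition \ref{boundary-set-prop}, using the Cohen-Lustig uniqueness machinery to get a canonical form on which the test can be performed.

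First, given a Dehn twist automorphism $\hat\phi \in \Out(\FN)$, I would apply the algorithm of Cohen-Lustig from \cite{CL02} (already invoked earlier in the paper, cf.\ the paragraph introducing Theorem \ref{efficient-uniqueness}) to pass from an arbitrary Dehn twist representative of $\hat\phi$ to an \emph{efficient} Dehn twist representative $D: \cal G \to \cal G$. All the data of $\cal G$, the edge monomorphisms $f_e$ and the twistors $z_e$ are then explicitly computable, and in particular for every vertex $v \in V(\Gamma(\cal G))$ I can list the finite family
\[
W_v \ := \ \{\, f_{e_i}(g_{e_i}) \,:\, e_i \in E(\Gamma(\cal G)),\ \tau(e_i) = v\,\}
\]
as a finite subset of the free group $G_v$, whose rank is determined by $\cal G$.

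Next, under the hypothesis of the corollary, for each such vertex $v$ I would run the assumed algorithm that decides whether $W_v \subset G_v$ is a boundary family. Since the vertex set $V(\Gamma(\cal G))$ is finite, this yields, after finitely many invocations of the hypothesized subroutine, a definite yes/no answer for each $v$.

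Finally, I would invoke Proposition \ref{boundary-set-prop}: the outer automorphism $\hat\phi$ is geometric if and only if $W_v$ is a boundary family in $G_v$ for \emph{every} vertex $v$. So the overall algorithm answers ``geometric'' precisely when all the individual boundary-family tests return yes, and ``not geometric'' as soon as one of them returns no. I do not foresee a genuine obstacle here, since the real work has already been done: the two non-trivial inputs are (a) the Cohen-Lustig efficient-representative algorithm, which is cited and not re-proved, and (b) Proposition \ref{boundary-set-prop}, which packages the content. The only thing to be careful about is bookkeeping — namely that ``boundary family'' in Definition \ref{bdy-family} allows a \emph{subset} of the full collection $\cal C$ of Theorem \ref{boundary-collection}, which is precisely what the proof of Proposition \ref{boundary-set-prop} requires and what the hypothesized algorithm is assumed to decide.
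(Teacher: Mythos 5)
Your proposal is correct and follows essentially the same route as the paper: pass to an efficient Dehn twist representative via the Cohen--Lustig algorithm, then apply Proposition \ref{boundary-set-prop} vertex by vertex using the hypothesized boundary-family decision procedure. The paper treats the corollary as an immediate consequence of that proposition (with the reduction to an efficient representative spelled out in the algorithm section), so your write-up matches its intent exactly.
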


We now obtain as direct consequence of the last result together with Corollary \ref{bdy-algorithm} a direct proof of Corollary \ref{CLW} from the Introduction.




\section{The Algorithm}

In this section we describe concretely the algorithm which decides whether a given automorphism $\hat \phi \in \Out(\FN)$ is of polynomial growth, and whether its power $\hat \phi^{t_n}$ is geometric or not.

\medskip
\noindent
\textbf{Step 1.}
We assume that $\hat \phi$ is given as usually through specifying for some basis $\cal B$ of $\FN$ and some representative $\phi \in \Aut(\FN)$ of $\hat \phi$ the $\phi$-images of the basis elements as words in $\cal B \cup \cal B^{-1}$.

It is shown in \cite{BFH00} how to derive from these data an improved relative train track representative $f: \Gamma \to \Gamma$ of $\hat \phi$, which has furthermore the property that either it contains an exponentially growing stratum, or else its $t_n$-th power has the conditions specified in
Theorem \ref{top-rep}. 
Since these conditions are easy to check 
in finite time, 

at this point we detect whether $\hat \phi$ is of polynomial growth or not. 

In case of a positive answer, we replace for convenience from now on $\hat \phi$ by $\hat \phi^{t_n}$.

\medskip
\noindent
\textbf{Step 2.}
We now transform  $f: \Gamma \to \Gamma$ into a special topological representative of $\hat \phi$ as specified in Definition \ref{special-tt}. For this we follow exactly the procedure explained in section \ref{sec-4}: One first relabels the edges so that every fixed edge now belongs to the bottom stratum. One then moves up from the bottom through all strata, and each time when a stratum $H_i$ consists of an edge $e_i$ which has its initial vertex at some inessential component $\cal C$ of $H_{i-1}$, one performs the sliding operation defined in subsection \ref{3.2} to first replace for any $k \geq i+1$ an edge $e_k$ with initial vertex in $\cal C$ by an edge $e'_k$ that has initial vertex in an essential component. As explained in subsection \ref{3.3}, after each such replacement one has to adjust the attaching maps of any 
edge $e_{k'}$ with $k' \geq k+1$ by a homotopy, 
using the data given concretely by the performed sliding operation. As final operation of this step, after having gone through all strata $H_{k}$ with $k \geq i+1$, we erase the component $\cal C$ together with the edge $e_i$ from the resulting graph.

After finitely many of those operations one has eliminated all inessential components in any subgraph of the given filtration, so that $f: \Gamma \to \Gamma$ is now a special topological representative of $\hat\phi$.

\medskip
\noindent
\textbf{Step 3.}
The next objective would be to derive from the special topological representative $f: \Gamma \to \Gamma$ an iterated Dehn twist representative of $\hat\phi$, as explained in section \ref{sec-5}. This is, however, not the most efficient way, from an algorithmic standpoint. 

Instead we consider only the subgraph $\Gamma'_2$ of $\Gamma$ which is the connected component of $\Gamma_2$ that contains the edge $e_2$. This subgraph is $f$-invariant, and (as shown in the proof of Proposition \ref{k-level-rep}) it defines a partial Dehn twist $D_2: \cal G_2 \to \cal G_2$ relative to $V(\cal G_2)$, where $\cal G_2$ consists of a single edge $E_2$, every vertex group of $\cal G_2$ is given by the fundamental group of the corresponding connected component of $\Gamma'_2 \cap \Gamma_1$ (there are either one or two such components), and the correction terms of $E_2$ and $\bar E_2$ are given by 
$u_2^{-1}$ and $w_2$ respectively, for $f(e_2) = w_2 e_2 u_2$.

Since $f$ acts as identity on $\Gamma_1$, it follows immediately that $D_2$ is an traditional graph-of-groups Dehn twist. At this point we apply the following:

\medskip
\noindent
{\bf Subalgorithm I:}  In \cite{CL02} an algorithm is described that transforms any given Dehn twist into an efficient Dehn twist. We apply this to $D_2$, so that from now on we can assume that $D_2: \cal G_2 \to \cal G_2$ is efficient.

\medskip

We now pass to $\Gamma_3$ and proceed precisely as before for $\Gamma_2$: From $\Gamma'_3$ we construct algorithmically a partial Dehn twist $D_3: \cal G_3 \to \cal G_3$, for which there are two possibilities:  If $\Gamma'_3$ and $\Gamma'_2$ are disjoint, then we are exactly in the same situation as before, so that in this case we obtain an efficient Dehn twist $D_3: \cal G_3 \to \cal G_3$.

If, on the other hand, $\Gamma'_3$ and $\Gamma'_2$ are not disjoint, then $D_3: \cal G_3 \to \cal G_3$ is an iterated Dehn twist of level 2, or in other words, a partial Dehn twist relative to a family of local Dehn twists. In this case we consider the correction terms of $E_3$ and of $\bar E_3$ in the adjacent vertex groups $G_i$, on which $D_3$ acts as (possibly trivial) efficient Dehn twist $D'_i$. We thus can pass to the following:

\medskip
\noindent
{\bf Subalgorithm II (\cite{KY02}) :}  For any efficient Dehn twist $D': \cal G' \to \cal G'$ it can be decided whether (for any vertex $v$ of $\cal G'$) a given element $w \in \pi_1(\cal G', v) \subset \Pi(\cal G)$ is $D'$-conjugate in $\Pi(\cal G')$ to an element of $\cal G'$-length 0. 

\medskip

If one of the correction terms of $D_3$ is not locally zero (i.e. Subalgorithm II gives a negative answer), then it follows from the main result of \cite{KY02} 
(see Corollary \ref{rel-D-twists})
that the automorphism induced by $D_3$ and thus $\hat\phi$ has at least quadratic growth. 
In this case we know that $\hat \phi$ is not geometric (see Corollary \ref{corquadratic}).

(Note that this is the place where we crucially need that there is no inessential component in the bottom subgraph $\Gamma_1$, as otherwise it could happen that $\cal G_3$ is not minimal, and in this case Corollary \ref{rel-D-twists} would fail to hold.)

If, on the other hand, all correction terms are locally zero, we can pass to the following:

\medskip
\noindent
{\bf Subalgorithm III (\cite{KY01}):}  Every partial Dehn twist $D: \cal G \to \cal G$ relative to a family of local Dehn twists, for which all correction terms are locally zero, induces on $\pi_1\cal G$ an traditional Dehn twist automorphism. A graph-of-groups Dehn twist representative $D'$ of the outer automorphism $\hat D$ can be derived algorithmically from $D$.

\medskip

Thus we can transform $D_3$ algorithmically first into an traditional Dehn twist, and then apply Subalgorithm I to make it efficient. 

We then pass to $\Gamma_4$, and repeat the above procedure iteratively, going through all strata of $\Gamma$.  As a result we either obtain that $\hat \phi$ has at least quadratic growth and hence is not geometric, or else we have derived an efficient Dehn twist representative $D: \cal G \to \cal G$ for $\hat \phi$.

\medskip
\noindent
\textbf{Step 4.}
We now turn to section \ref{sec-6}: It has been shown in Proposition \ref{boundary-set-prop} that $\hat \phi$ is geometric if and only if for every vertex group $G_v$ of $\cal G$ the family of edge group generators for the edges adjacent to the vertex $v$ define a boundary family in $G_v$. This is a question that can be decided algorithmically through the Whitehead algorithm, see Corollary \ref{cor-b-sets}. This finishes the algorithm.

\end{document}